\newcommand*{\hp}{\emptyset^{\prime}}
\newcommand*{\strings}{\ensuremath{\mathbf{2}^{< \mathbb{N}}}}
\newcommand*{\sequences}{\ensuremath{\mathbf{2}^{\mathbb{N}}}}
\newcommand*{\foralmostall}{\ensuremath{\overset{\infty}{\forall}}}
\newcommand*{\io}{\ensuremath{\overset{\infty}{\exists}}}
\newcommand{\N}{\mathbb{N}}
\newcommand{\Q}{\mathbb{Q}}
\newtheorem*{Theorem*}{Theorem}
\newtheorem{Theorem}{Theorem}[section]
\newtheorem{Lemma}[Theorem]{Lemma}
\newtheorem{Corollary}[Theorem]{Corollary}
\newtheorem{Proposition}[Theorem]{Proposition}
\theoremstyle{definition}
\newtheorem{Definition}[Theorem]{Definition}
\newtheorem*{Definition*}{Definition}
\newtheorem*{Question}{Question}
\theoremstyle{remark}
\begin{document}
	\title{Relativized depth}
	\author{Laurent Bienvenu, Valentino Delle Rose, Wolfgang Merkle}
	\date{}
	\maketitle
	
	\section*{Abstract}
	Bennett's notion of depth is usually considered to describe, roughly speaking, the usefulness and internal organization of the information encoded into an object such as an infinite binary sequence, which as usual will be identified with a set of natural numbers. We consider a natural way to relativize the notion of depth for such sets, and we investigate for various kinds of oracles whether and how the unrelativized and the relativized version of depth differ. 
	
	Intuitively speaking, access to an oracle increases computation power.  Accordingly, for most notions for sets considered in computability theory, for the corresponding classes trivially for all oracles the unrelativized class is contained in the relativized class or for all oracles the relativized class is contained in the unrelativized class. 
	Examples for these two cases are given by the classes of computable and of Martin-L\"{o}f random sets, respectively. However, in the case for depth the situation is different.
	
	It turns out that the classes of deep sets and of sets that are deep relative to the halting set $\hp$ are incomparable with respect to set-theoretical inclusion. On the other hand, the class of deep sets is strictly contained in the class of sets that are deep relative to any given Martin-L\"{o}f-random oracle. The set built in the proof of the latter result can also be used to give a short proof of the known fact that every PA-complete degree is Turing-equivalent to the join of two Martin-L\"{o}f-random sets. In fact, we slightly strengthen this result by showing that every DNC$_2$ function is truth-table-equivalent to the join of two Martin-L\"{o}f random sets.
	
	Furthermore, we observe that the class of deep sets relative to any given K-trivial oracle either is the same as or is strictly contained in the class of deep sets. Obviously, the former case applies to computable oracles. We leave it as an open problem which of the two possibilities can occur for noncomputable K-trivial oracles.
	
	\section{Introduction}
	The notion of depth introduced by Bennett can be seen as a formalization of the idea that the same information can be organized in different ways, making certain encodings more or less useful for certain computational purposes. In particular, depth goes beyond just measuring the information encoded into a finite object by its Kolmogorov complexity, i.e., by the length of an optimal effective description of the object as a binary string. 
	
	Computability theory provides a paradigmatic example of organizing the same information in different ways by the halting set $\hp$ and Chaitin's Omega~$\Omega$. Since $\hp$ is a c.e.~set, describing $\hp \upharpoonright n$, the  prefix of its characteristic sequence of length $n$, requires not more than~$\mathrm{O}(\log n)$ bits. On the other hand, Chaitin~\cite{C} demonstrated that $\Omega$ is ML-random, so describing $\Omega \upharpoonright n$ requires approximately $n$ bits. It is well-known that $\Omega \equiv_T \hp$, that is, $\Omega$ and~$\hp$ can be mutually computed from each other, and in this sense the two sets encode the same information. More specifically, $\Omega$ is a compressed version of $\hp$ since the first $\mathrm{O}(\log n)$ bits of $\Omega$ are sufficient to decide effectively whether $\varphi_n(n)$ halts, i.e., whether~$n$ is in~$\hp$. On the other hand, computing~$\hp$ from~$\Omega$ must necessarily be ``slow'' as one can show that  $\hp$ is not truth-table reducible to $\Omega$, i.e.,  $\hp$ cannot be computed from~$\Omega$ by an oracle Turing machine that runs within some computable time bound. In fact,  $\hp$ is not truth-table reducible to any ML-random set. Elementary proofs of the statements above can be found in \cite{CN}. 	
	
	This situation is captured quite well within the framework of depth. Depth has been introduced by Bennett~\cite{B} in order to distinguish ``useful'' or ``organized'' information from other information such as random noise. In particular, a set is said to be deep if, for any given computable time bound $t$, the difference between the length of the shortest description of the prefix o length~$n$ that can be decoded in time $t(n)$ and the length of its ``true'' optimal description goes to infinity. In other words, no prefix of a deep set can be optimally described within any computable time bound. It turns out that neither ML-random nor computable sets are deep, whereas the halting problem~$\hp$ is deep. Moreover, a key property of depth, known in literature as ``Slow Growth Law'', states that no deep set is truth-table reducible to a nondeep set, in other words, no deep set can be computed from a nondeep set in a ``fast'' way.
	
	In the present work, we consider a natural way of relativizing the notion of depth with the aim of better understanding how an oracle may help in organizing information. This relativization is designed in order to keep focusing on the same class of ``fast'' computations in so far as it is defined in terms of computable time bounds and not in terms of time bounds computable in the oracle. The relativization of depth gains additional interest since it differs in the following respect from most other relativizations considered in computability theory.  Usually, when relativizing a class, trivially for all oracles the relativized class contains the unrelativized class or the other way round, i.e., for all oracles  the unrelativized class contains the relativized one. Examples are given by the classes of computable and of  Martin-L\"{of} random sets, respectively. For the class of deep sets the situation is different since depth is defined in terms of the difference between time-bounded and unbounded Kolmgorov complexity. With access to any given oracle, trivially each individual value of the two latter quantities stays the same or decreases but a priori for any given argument the two corresponding values may decrease by different amounts, hence their difference may increase or decrease. As a consequence, a priori none of the following four cases can be ruled for a given oracle when comparing the classes of deep sets and of sets that are deep relative to the oracle: first, the two classes may be incomparable with respect to set-theoretical inclusion, second, the unrelativized class may be strictly contained in the relativized class, third, the relativized class may be strictly contained in the unrelativized class and, fourth, the two classes may be the same. 
	
	Section \ref{hp-depth} is devoted to proving that the first case applies to the oracle~$\hp$, while in Section~\ref{MLR} we show that the second case applies to all ML-random oracles. As a byproduct of our proof, we slightly strengthen a result due to Barmpalias, Lewis and Ng~\cite{BLN}, which states that every PA-complete degree is the join of two ML-random degrees: in fact, we show that every DNC$_2$ function is truth-table-equivalent to the join of two Martin-L\"{o}f random sets.  In Section~\ref{K-trivial} we observe that every K-trivial oracle  falls under the third or fourth case, while case four holds for all computable oracles. We leave it as an open problem whether the third case holds for some or all noncomputable K-trivial oracles.
	
	\section{Preliminaries}
	This section is devoted to recall known notions from computability theory and algorithmic randomness used throughout the paper. We first introduce our notation, which is quite standard and follows mostly the textbooks \cite{DH}, \cite{N2009} and \cite{S}.
	
	The quantifier $\foralmostall$ is used to mean ``for all but finitely many elements'', while $\io$ means ``there exists infinitely many elements''. For two real-valued functions $f,g$ and a quantifier $Q$, we write 
	$$(Qx)[f(x) \le^+ g(x)] \ (\text{respectively,} \ (Qx)[f(x) \le^\times g(x)]).$$ 
	to mean that there exists a constant $c>0$ such that for all~$x$ in the range of $Q$, $f(x) \leq g(x) + c$ (respectively $f(x) \leq c \cdot g(x)$). 
		
	We denote by $\strings$ the set of all finite binary strings, while $\sequences$ denotes the Cantor space of all infinite binary sequences (that is, the subsets of $\mathbb{N}$). The empty string is denoted by $\lambda$. Given a string $\sigma$, its length is denoted by $|\sigma|$. The set of all strings of length $n$ is denoted by $\mathbf{2}^n$. Given a set $X \in \sequences$, we write $X \upharpoonright n$ to denote $X(0) X(1) \dots X(n-1)$, namely the string of the first $n$ bits of $X$. For a string $\sigma$, the \emph{cylinder} $[\sigma]$ is the set of $X \in \sequences$ such that $X \upharpoonright |\sigma| = \sigma$. Moreover, we denote the Lebesgue measure on $\sequences$ by $\mu$, that is, the unique Borel measure such that $\mu([\sigma])=2^{-|\sigma|}$ for all~$\sigma$. 
	
	Given a partial computable function (equivalently, a Turing machine) $\varphi$ and $\sigma \in \strings$, we write $\varphi(\sigma)[t]$ to denote the output of $\varphi(\sigma)$ after $t$ steps of computation. Moreover, if $\varphi(\sigma)\downarrow = \rho$, we call $\sigma$ a \emph{code for} $\rho$ (\emph{with respect to} $\varphi$).
	
	\subsection{Kolmogorov complexity}
	Recall that a set $A \subseteq \strings$ is \emph{prefix-free} if no member in $A$ is a prefix of another member of the set. A partial computable function $\varphi: \strings \rightarrow \strings$ is \emph{prefix-free} if its domain is a prefix-free set. For the rest of the paper, we fix a prefix-free function $\mathcal{U}$ which is $\emph{universal}$, in the sense that, for every prefix-free partial computable function $\varphi$ there exists $\rho_{\varphi} \in \strings$ such that 
	$$(\forall \sigma) \left[ \mathcal{U}(\rho_{\varphi} \sigma) = \varphi(\sigma) \right].$$ 
	Moreover, we can assume that, if the computation $\varphi(\sigma)$ halts within $t$ steps, then $\mathcal{U}(\rho_{\varphi} \sigma)$ halts (e.g.) within $t^2$ steps (as shown in \cite{HS}).
	
	A classical approach to measure the information contained in some string is given by its \emph{prefix-free complexity}, which is, roughly speaking, the length of its shortest code (with respect to some universal prefix-free partial computable function). We are also interested in \emph{time-bounded} versions of prefix-free complexity. We call a function $t: \mathbb{N} \rightarrow \mathbb{N}$ a \emph{time bound} if it is total and non-decreasing: then the $t$-time-bounded Kolmogorov complexity of a string $\sigma$ is the length of its shortest code running in at most $t(|\sigma|)$ steps.
	
	\begin{Definition} \label{Kcomp}
		The \emph{prefix-free complexity} of $\sigma \in \strings$ is 
		$$K(\sigma) = \min \left\lbrace |\tau| : \ \mathcal{U}(\tau)\downarrow = \sigma \right\rbrace.$$
		
		Given a time bound $t$, the $t$\emph{-time-bounded prefix-free complexity} of $\sigma$ is 
		$$K^t(\sigma)= \min \left\lbrace |\tau| : \ \mathcal{U}(\tau)[t(|\sigma|)]\downarrow = \sigma \right\rbrace.$$
	\end{Definition}
	
Note that while the function $K$ only depends on the choice of the universal machine by an additive constant, this is no longer the case for its time-bounded version. However, the assumption on the universal machine that it can simulate any other machine up to a quadratic blow-up in computation time is enough to make all notions presented in this paper independent from the particular choice of universal machine. 	

	We can also equip our universal prefix-free machine $\mathcal{U}$ with some oracle $A \in \sequences$. Given a string $\sigma$, its prefix-free complexity relative to $A$, denoted by $K^A(\sigma)$, is defined by relativizing Definition \ref{Kcomp} in the obvious way. The same applies to its $t$-time-bounded prefix-free complexity relative to $A$, which we denote by $K^{A,t}(\sigma)$. Recall that $A \in \sequences$ is \emph{Turing reducible} to $B \in \sequences$, and we write $A \le_T B$, if there is an oracle machine $\varphi$ such that $\varphi^B(n)=A(n)$ for all $n$. Moreover, $A$ is \emph{truth-table reducible} (or simply \emph{tt-reducible}) to $B$, and we write $A \le_{tt} B$, if $A \le_T B$ via some oracle machine $\varphi$ such that $\varphi^X$ is total for every oracle $X$. Equivalently, $A \le_{tt} B$ if there is a computable time-bound $t$ and an oracle machine $\varphi$ such that $\varphi^X(n)[t(n)] \downarrow$ for every oracle $X$ and every $n$ and $A \le_T B$ via $\varphi$. The following lemma shows how reductions among sets relate with their relative strength in compressing strings.

	\begin{Lemma} \label{K_lemma}
		Let $A, B \in \sequences$. 
		\begin{enumerate}[(i)]
			\item If $A \leq_T B$, then $K^B(\sigma) \leq^+ K^A(\sigma)$.
			\item If $A \leq_{tt} B$, then for every computable time-bound $t$ there is a computable time-bound $t'$ such that $K^{B,t'}(\sigma) \leq^+ K^{A,t}(\sigma)$.
		\end{enumerate}
	\end{Lemma}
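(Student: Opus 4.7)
My plan is to handle both parts uniformly by building, from the reduction, a new prefix-free oracle machine with the heavier oracle that re-plays a $\mathcal{U}^A$-computation while answering $A$-queries via the reduction, and then appeal to universality (plus, in part (ii), the quadratic simulation overhead built into $\mathcal{U}$).

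For part (i), fix an oracle machine $\psi$ witnessing $A \le_T B$, i.e.\ $\psi^B(n) = A(n)$ for all $n$. Define a $B$-oracle prefix-free partial computable function $\varphi$ by letting $\varphi^B(\tau)$ simulate $\mathcal{U}^A(\tau)$ step by step, and whenever the simulated run of $\mathcal{U}$ issues an oracle query ``$A(n)?$'', replace it by a call to $\psi^B(n)$. Then $\mathrm{dom}(\varphi^B) = \mathrm{dom}(\mathcal{U}^A)$ is prefix-free and $\varphi^B = \mathcal{U}^A$ as partial functions. By the universality of $\mathcal{U}^B$ there is a constant string $\rho_\varphi$ such that $\mathcal{U}^B(\rho_\varphi\tau) = \varphi^B(\tau)$ for every $\tau$, so any shortest $\mathcal{U}^A$-code $\tau$ of $\sigma$ lifts to a $\mathcal{U}^B$-code $\rho_\varphi\tau$ of $\sigma$, giving $K^B(\sigma) \le K^A(\sigma) + |\rho_\varphi|$.

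For part (ii), fix a witness $\psi$ of $A \le_{tt} B$ together with a computable total time bound $s$ such that $\psi^X(n)$ halts within $s(n)$ steps on every oracle $X$. Run exactly the same construction as in (i), but now track the time used by $\varphi^B(\tau)$ when one only lets the simulated $\mathcal{U}^A(\tau)$ run for $t(|\sigma|)$ steps: such a run asks at most $t(|\sigma|)$ queries, each on an integer of value at most $t(|\sigma|)$, and each query is answered by $\psi^B$ in at most $s(t(|\sigma|))$ steps, so the whole simulation halts within some computable bound $t''(|\sigma|) := t(|\sigma|)\bigl(1 + s(t(|\sigma|))\bigr)$. Composing with the quadratic slowdown of the universal machine, $\mathcal{U}^B(\rho_\varphi\tau)$ halts within $t'(|\sigma|) := (|\rho_\varphi| + t''(|\sigma|))^2$ steps whenever $\mathcal{U}^A(\tau)[t(|\sigma|)]\downarrow = \sigma$, yielding $K^{B,t'}(\sigma) \le K^{A,t}(\sigma) + |\rho_\varphi|$, and $t'$ is a computable time bound as required.

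I do not expect any essential obstacle: the whole argument is a standard ``relativize and simulate'' construction once one notices that the reduction $\psi$, being a truth-table reduction in case (ii), contributes a computable (oracle-independent) overhead per query. The only point that needs a little care is bounding the queried arguments in terms of $|\sigma|$ so as to express the total simulation time as a function of $|\sigma|$ alone, which is what the observation ``query values are bounded by the running time'' is used for.
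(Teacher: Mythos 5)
Your proposal is correct and follows essentially the same route as the paper: simulate $\mathcal{U}^A$ with oracle $B$ by answering $A$-queries via the reduction, invoke universality for part (i), and in part (ii) bound the total simulation time by a computable function of $|\sigma|$ using the computable time bound of the tt-reduction together with the quadratic simulation overhead of $\mathcal{U}$. The paper's own proof is just a terser sketch of this same argument.
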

	\begin{proof}
		To prove (i), just observe that any optimal $A$-code $\tau$ for $\sigma$ is also a $B$-code for $\sigma$, as we may consider a Turing machine which first computes the required bits of $A$ using oracle $B$ and then simulates the computation $\mathcal{U}^{A}(\tau)$.
		
		Moreover, whenever $A$ is computable from $B$ in some computable time-bound (that is, $A \leq_{tt} B$), clearly every $A$-$t$-fast-code for $\sigma$ is also a $B$-$t'$-fast-code for $\sigma$, for any computable time-bound $t'$ which allows to compute the required bits of $A$ from $B$.
	\end{proof}

\subsection{Lower-semicomputable discrete semimeasures}

Another way to look at the prefix-free Kolmogorov complexity function, which will be very useful in this paper,  is via \emph{lower-semicomputable discrete semimeasures}. 

	\begin{Definition}
		\begin{enumerate}[(i)]
			\item A \emph{discrete semimeasure} is a function $m: \strings \rightarrow [0, \infty)$ such that $\sum_{\sigma} m(\sigma) \leq 1$. It is \emph{lower-semicomputable} if there is a uniformly computable family of functions $m_s: \strings \rightarrow \mathbb{Q}$ such that, for any string $\sigma$,
			$$(\forall s)[m_{s+1}(\sigma) \ge m_s(\sigma)] \quad \text{and} \quad \lim\limits_{s \rightarrow \infty} m_s(\sigma)=m(\sigma).$$ 
			We will write \emph{lss} for lower-semicomputable discrete semimeasure.
			\item A lss $m$ is \emph{universal} if, for each lss $m'$, 
			$$\left(\forall \sigma \right) \left[ m'(\sigma) \leq^{\times} m(\sigma) \right].$$
		\end{enumerate}
	\end{Definition}

We recall the following known facts about lss. 

\begin{Theorem}[Levin, see paragraph 3.9 of \cite{DH}]
 $ $\\
(i) There exists a universal lss, and from now on we fix one of them which we denote by $\mathbf{m}$. \\
(ii) The function $\sigma \mapsto 2^{-K(\sigma)}$ is a universal lss. Since two universal lss are, by definition, within a multiplicative constant of one another, it follows that $K(\sigma)=^+ - \log \mathbf{m}(\sigma)$. \end{Theorem}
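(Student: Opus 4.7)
The plan for (i) is the classical ``optimal mixture'' construction. First I would effectively enumerate all lss as a sequence $(m_e)_{e \in \N}$. A partial computable function need not define a lss, so I would start from an enumeration of all partial computable functions $\varphi_e$ outputting increasing finite rational approximations on $\strings$, and apply a stage-by-stage truncation: at stage $s$, accept a proposed increase of $m_{e,s}(\sigma)$ only if the resulting total $\sum_\tau m_{e,s}(\tau)$ remains at most $1$; otherwise leave $m_{e,s}(\sigma)$ unchanged. The resulting $m_e$ is always a valid lss, and it agrees with $\varphi_e$ whenever $\varphi_e$ was already a lss, so every lss appears in the list. Then I would set $\mathbf{m}(\sigma) = \sum_e 2^{-e-1} m_e(\sigma)$. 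This is lower-semicomputable, has total mass at most $\sum_e 2^{-e-1} = 1$, and for any lss $m'$, writing $m' = m_e$, gives $\mathbf{m}(\sigma) \geq 2^{-e-1} m'(\sigma)$, so $m'(\sigma) \leq^{\times} \mathbf{m}(\sigma)$.

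For (ii) the two inequalities require different techniques. The easy direction is that $\sigma \mapsto 2^{-K(\sigma)}$ is a lss: lower-semicomputability follows because $K(\sigma)$ can be approximated from above by running $\mathcal{U}$ for more and more steps, and $\sum_\sigma 2^{-K(\sigma)} \leq 1$ is exactly Kraft's inequality applied to the prefix-free domain of $\mathcal{U}$. Combining this with the universality of $\mathbf{m}$ from (i) already yields $2^{-K(\sigma)} \leq^{\times} \mathbf{m}(\sigma)$, i.e.\ $-\log \mathbf{m}(\sigma) \leq^+ K(\sigma)$.

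The main obstacle, and the content of Levin's coding theorem, is the reverse inequality $K(\sigma) \leq^+ -\log \mathbf{m}(\sigma)$. My plan is to appeal to the Kraft--Chaitin theorem. Since $\mathbf{m}$ is lower-semicomputable, I can effectively enumerate ``requests'' $(k,\sigma)$ as follows: each time the approximation $\mathbf{m}_s(\sigma)$ first crosses a new threshold of the form $2^{-k}$ (with $k$ a positive integer), emit the request $(k,\sigma)$. A geometric-series estimate shows the weighted total $\sum 2^{-k}$ over all emitted requests is bounded by $2 \sum_\sigma \mathbf{m}(\sigma) \leq 2$; absorbing the factor $2$ by shifting every requested length up by one, Kraft--Chaitin supplies a prefix-free machine that, for each request $(k,\sigma)$, outputs $\sigma$ from some code of length $k$. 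Compiling this machine into $\mathcal{U}$ costs only an additive constant, whence $K(\sigma) \leq^+ k \leq^+ -\log \mathbf{m}(\sigma)$. Combined with the previous inequality this yields $K(\sigma) =^+ -\log \mathbf{m}(\sigma)$ and, as a bonus, shows that $\sigma \mapsto 2^{-K(\sigma)}$ is itself universal.
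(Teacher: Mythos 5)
Your proof is correct: the paper does not prove this theorem but cites it as Levin's result from \cite{DH}, and your argument --- the truncated effective enumeration of all lss followed by a weighted mixture for (i), Kraft's inequality for the easy half of (ii), and the Kraft--Chaitin request-set argument with dyadic thresholds for the coding-theorem half --- is exactly the standard proof given there. The geometric-series bound $\sum_{k \ge k_0} 2^{-k} \le 2\,\mathbf{m}(\sigma)$ and the absorption of the factor $2$ by shifting lengths are handled correctly, so nothing is missing.
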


Since $\mathbf{m}$ is lower-semicomputable it can be represented by a non-decreasing family of uniformly computable functions $(\mathbf{m}_s)$. This allows us to define the time-bounded version of $\mathbf{m}$.

\begin{Definition}
Let $t: \mathbb{N} \rightarrow \mathbb{N}$ be a computable time bound. The time-bounded version $\mathbf{m}^t$ of $\mathbf{m}$ is the function defined for all~$\sigma$ by 
\[
\mathbf{m}^t(\sigma)=\mathbf{m}_{t(|\sigma|)}(\sigma)
\]
\end{Definition}

We will constantly make use of the following easy lemma which will allow us to switch between time-bounded Kolmogorov complexity, time-bounded semimeasures and computable semimeasures. 

\begin{Lemma}\label{lem:time-bounded}
For any given computable time bound~$t$, $\mathbf{m}^t$ is a computable semimeasure. Conversely, if $m$ is a computable semimeasure, there exists a computable time bound~$t$ such that $m \leq^\times \mathbf{m}^t$. In particular, for any given time bound~$t$, $2^{-K^t}$ is a computable semimeasure, hence there exists a computable time bound $t'$ such that $2^{-K^t} \leq^\times \mathbf{m}^{t'}$ (or equivalently, $-\log m^{t'} \leq^+ K^t$). 
\end{Lemma}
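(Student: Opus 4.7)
The plan is to dispatch the three assertions of the lemma in turn: the first is an immediate verification, the third is a bookkeeping corollary of the first two, and the second carries all of the mathematical content.

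For the first assertion, fix a computable time bound $t$. Computability of $\mathbf{m}^t$ is immediate from the definition, since $(s,\sigma) \mapsto \mathbf{m}_s(\sigma)$ is uniformly computable into $\mathbb{Q}$ and $n \mapsto t(n)$ is computable. The semimeasure property follows because each $\mathbf{m}_s$ is pointwise bounded by $\mathbf{m}$, so for any finite $F \subseteq \strings$ one has $\sum_{\sigma \in F} \mathbf{m}^t(\sigma) \le \sum_{\sigma \in F} \mathbf{m}(\sigma) \le 1$.

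The core step is the converse. To produce, from a given computable semimeasure $m$, a computable time bound $t$ with $m \le^\times \mathbf{m}^t$, I would work with a concrete presentation of the universal lss $\mathbf{m}$ as a weighted sum $\mathbf{m} = \sum_e 2^{-e-1} m_e$ over an effective enumeration $(m_e)$ of all lss, with matching stagewise approximation $\mathbf{m}_s = \sum_{e \le s} 2^{-e-1} m_{e,s}$. Since $m$ is a lss, there is an index $e_0$ with $m = m_{e_0}$; and since $m$ is actually computable on each input (not merely lower-semicomputable), we can choose the $e_0$-th algorithm so that its stage approximation stabilises to $m(\sigma)$ after a computable number of steps, that is, there is a total computable $h \colon \strings \to \mathbb{N}$ with $m_{e_0,s}(\sigma) = m(\sigma)$ for every $s \ge h(\sigma)$. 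Defining recursively
\[
t(n) \;=\; \max\!\Bigl(e_0,\; \max_{\sigma \in \mathbf{2}^n} h(\sigma),\; t(n-1)\Bigr)
\]
gives a computable non-decreasing time bound, and for every $\sigma$ of length $n$ one has $\mathbf{m}^t(\sigma) \ge 2^{-e_0-1}\, m_{e_0,t(n)}(\sigma) = 2^{-e_0-1}\, m(\sigma)$, which is the desired multiplicative inequality.

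The third assertion is then routine. To see that $2^{-K^t}$ is a computable semimeasure, note that $K^t(\sigma)$ is computed by simulating $\mathcal{U}$ for $t(|\sigma|)$ steps on every string $\tau$ of length at most $t(|\sigma|)$ and taking a shortest halting witness, and Kraft's inequality gives $\sum_\sigma 2^{-K^t(\sigma)} \le \sum_\sigma 2^{-K(\sigma)} \le 1$. Applying the previous paragraph to $m = 2^{-K^t}$ yields a computable $t'$ with $2^{-K^t} \le^\times \mathbf{m}^{t'}$, and taking $-\log$ rewrites this as $-\log \mathbf{m}^{t'} \le^+ K^t$. The main delicate point is the second paragraph, specifically the step of turning the abstract inequality $m \le^\times \mathbf{m}$ provided by universality into one that holds already at a computable stage $t(|\sigma|)$: without committing to an enumeration-based form of $\mathbf{m}_s$ one is forced into a dovetailing search for a stage $s$ where $\mathbf{m}_s(\sigma)$ overtakes $m(\sigma)$ up to a constant, and such a search cannot be made uniformly effective since $m(\sigma) = 0$ versus $m(\sigma) > 0$ is not decidable from a code of $m$. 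Fixing the canonical presentation of $\mathbf{m}$ at the outset bypasses this issue cleanly.
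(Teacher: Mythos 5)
Your treatment of the first and third assertions is correct and essentially the paper's. On the core converse direction you take a genuinely different route, and your closing justification for doing so is mistaken. The paper argues directly from universality: since $m$ is in particular lower-semicomputable, there is a constant $c$ with $m(\sigma) < c\cdot\mathbf{m}(\sigma)$ for all $\sigma$, and this inequality can be taken \emph{strict} because $\mathbf{m}$ multiplicatively dominates a strictly positive lss such as $\sigma\mapsto 2^{-2|\sigma|-2}$, so $\mathbf{m}(\sigma)>0$ everywhere. Hence for every $\sigma$ there \emph{is} a stage $s$ with $m(\sigma) < c\cdot\mathbf{m}_s(\sigma)$; this condition is c.e.\ in $(s,\sigma)$ (the $\mathbf{m}_s$ are rational-valued and uniformly computable), and since there are only finitely many strings of each length one can effectively search for a single stage $t(n)$ working for all $\sigma\in\mathbf{2}^n$. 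Your objection that ``$m(\sigma)=0$ versus $m(\sigma)>0$ is not decidable'' is beside the point: when $m(\sigma)=0$ the strict inequality is witnessed as soon as $\mathbf{m}_s(\sigma)>0$, which happens at some finite stage, and the search never needs to know which case it is in. So the dovetailing search you dismiss is exactly the paper's proof, and it works.

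Your alternative---locating $m$ as the $e_0$-th summand of a canonical presentation $\mathbf{m}=\sum_e 2^{-e-1}m_e$ and waiting until that summand has stabilized---is also essentially sound, but it pays for avoiding the (nonexistent) problem with two extra hypotheses the paper's argument does not need. First, it assumes the fixed universal lss $\mathbf{m}$ and its approximation $(\mathbf{m}_s)$ have this particular weighted-sum form, whereas the lemma concerns whatever universal lss and monotone approximation were fixed in the preliminaries; transferring your bound to a different choice of $(\mathbf{m}_s)$ is not automatic, since the statement being proved is precisely what one would need for that transfer. Second, the claim that the approximation reaches $m(\sigma)$ exactly by a computable stage $h(\sigma)$ presupposes that ``computable semimeasure'' means exactly computable with rational values; this is true of the intended applications $\mathbf{m}^t$ and $2^{-K^t}$, but for a merely computable real-valued $m$ one would need a small perturbation (e.g., replace $m$ by $\tfrac{1}{2}m(\sigma)+2^{-2|\sigma|-3}$ to get a known positive lower bound before asking for a stage within a multiplicative constant). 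Neither issue is fatal, but both are avoided by the direct universality argument.
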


\begin{proof}
That $\mathbf{m}^t$ is a computable semimeasure is immediate from the definition. Let now $m$ be a computable semimeasure. It is in particular lower-semicomputable hence there is a constant $c>0$ such that $m < c \cdot \mathbf{m}$. Since $\mathbf{m} = \lim_s \mathbf{m}_s$, for all~$n$, it suffices to take $t(n)$ to be the smallest $s$ such that $m(\sigma) < c \cdot \mathbf{m}_s (\sigma)$ for all~$\sigma$ of length~$n$ (it is immediate that~$t$ is computable). The rest of the lemma follows. 
\end{proof}	
	
\subsection{Martin-L\"of randomness}

	We are often interested in the information encoded into the prefixes of some set. The most studied effective randomness notion for sets, \emph{Martin-L\"{o}f randomness}, can be defined in terms of incompressibility of their prefixes. 
	
	\begin{Definition}
		A set $X \in \sequences$ is \emph{Martin-L\"{o}f random} (or simply \emph{ML-random}) if 
		$$\left(\forall n\right)[K(X \upharpoonright n) \ge^+ n].$$
	\end{Definition}	

	We also recall the other two main equivalent approaches to define Martin-L\"{o}f randomness, namely in terms of \emph{ML-tests} lower-semicomputable martingales. 
	
	\begin{Definition}
		\begin{enumerate}[(i)]
			\item A \emph{ML-test} is a sequence of uniformly $\Sigma_1^0$ classes $(\mathcal{U}_n)_{n \in \mathbb{N}}$ such that 
			$$(\forall n)[\mu(\mathcal{U}_n) \le 2^{-n}].$$
			\item A \emph{martingale} is a function $d: \strings \rightarrow [0, \infty)$ such that, for every string $\sigma$, $2d(\sigma) = d(\sigma0) + d(\sigma1)$. 
		\end{enumerate}
	\end{Definition}

	\begin{Proposition}
		For a set $X \in \sequences$, the following statements are equivalent. 
		
		\begin{enumerate}[(i)]
			\item $X$ is ML-random. 
			\item For any ML-test $(\mathcal{U}_n)_{n \in \mathbb{N}}$, 
			$$X \notin \bigcap_{n \in \mathbb{N}} \mathcal{U}_n.$$
			\item For any lower-semicomputable martingale $d$, 
			$$\limsup\limits_{n \rightarrow \infty} d(X \upharpoonright n) < \infty.$$
			\item For any lower-semicomputable martingale $d$, 
			$$\liminf\limits_{n \rightarrow \infty} d(X \upharpoonright n) < \infty.$$
		\end{enumerate}
	\end{Proposition}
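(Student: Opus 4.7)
I would establish the four-way equivalence in three stages: (i) $\Leftrightarrow$ (ii), (ii) $\Leftrightarrow$ (iv), and (iv) $\Rightarrow$ (iii), noting that (iii) $\Rightarrow$ (iv) is immediate from $\liminf \le \limsup$. For (i) $\Leftrightarrow$ (ii) I would apply classical Schnorr-style arguments. Contrapositively, (ii) $\Rightarrow$ (i) follows by observing that the classes $\mathcal{V}_c := \bigcup \{[\sigma] : K(\sigma) \le |\sigma| - c\}$ are uniformly $\Sigma_1^0$, have measure at most $2^{-c}$ by the Kraft inequality $\sum_\sigma 2^{-K(\sigma)} \le 1$, and contain $X$ whenever $X$ fails (i). For the contrapositive of (i) $\Rightarrow$ (ii), I would refine a ML-test $(\mathcal{U}_n)$ into uniformly c.e.~prefix-free antichains $A_n$ generating it, with $\sum_{\sigma \in A_n} 2^{-|\sigma|} \le 2^{-n}$; since $\sum_c \sum_{\sigma \in A_{2c}} 2^{-(|\sigma|-c)} \le \sum_c 2^{-c} < \infty$, the Kraft--Chaitin theorem yields a prefix-free machine coding each $\sigma \in A_{2c}$ in length $|\sigma| - c$, so if $X \in \bigcap_n \mathcal{U}_n$ then $K(X \upharpoonright k) \le k - c + O(1)$ for some $k$ at every $c$, contradicting (i).

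For (ii) $\Leftrightarrow$ (iv) I would use the standard translation between tests and lower-semicomputable martingales. Contrapositively, (ii) $\Rightarrow$ (iv) holds because a lower-semicomputable martingale $d$ with $\liminf_n d(X \upharpoonright n) = \infty$ yields the ML-test $\mathcal{U}_n := \{Y : \exists k\, d(Y \upharpoonright k) > 2^n\}$, whose measure is at most $d(\lambda) \cdot 2^{-n}$ by Kolmogorov's martingale inequality and which contains $X$. For (iv) $\Rightarrow$ (ii) contrapositively, given a ML-test $(\mathcal{U}_n)$ with $X \in \bigcap_n \mathcal{U}_n$, the functions $d_n(\sigma) := 2^{|\sigma|} \mu(\mathcal{U}_n \cap [\sigma])$ are lower-semicomputable martingales of initial value at most $2^{-n}$, and the sum $d := \sum_n d_n$ is a lower-semicomputable martingale of initial value at most $1$. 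The key point is that as soon as a prefix $\sigma_n$ of $X$ satisfies $[\sigma_n] \subseteq \mathcal{U}_n$, one has $d_n \equiv 1$ along all longer prefixes of $X$, so $d(X \upharpoonright m)$ dominates the count of $n$ with $|\sigma_n| \le m$, forcing $\liminf_m d(X \upharpoonright m) = \infty$.

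The remaining implication (iv) $\Rightarrow$ (iii) admits a cheap route through (ii) that sidesteps the usual \emph{savings martingale} construction: contrapositively, if some lower-semicomputable martingale $d$ has $\limsup_n d(X \upharpoonright n) = \infty$, then the same Kolmogorov argument already places $X$ into the ML-test $\mathcal{U}_n = \{Y : \exists k\, d(Y \upharpoonright k) > 2^n\}$, so $X$ fails (ii), and the construction of the previous paragraph then produces a lower-semicomputable martingale with diverging $\liminf$ along $X$, witnessing the failure of (iv). The main technical care I expect concerns the verification that the infinite sum $d = \sum_n d_n$ is a genuine lower-semicomputable martingale; this reduces to the uniform lower-semicomputability of the $d_n$ and to the convergence $\sum_n d_n(\lambda) \le \sum_n 2^{-n} < \infty$, after which the martingale identity and lower semicomputability are inherited termwise.
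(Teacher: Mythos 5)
Your proof is correct. Note first that the paper does not prove this proposition at all; it simply cites the standard references (Downey--Hirschfeldt, Nies), so the relevant comparison is with the textbook arguments those citations point to. Your treatment of (i) $\Leftrightarrow$ (ii) (Schnorr's theorem via the test $\mathcal{V}_c$ and the Kraft--Chaitin machine built from prefix-free generators of the $\mathcal{U}_n$) and of (ii) $\Leftrightarrow$ (iv) (Ville/Kolmogorov inequality in one direction, the martingale $d=\sum_n 2^{|\sigma|}\mu(\mathcal{U}_n\cap[\sigma])$ in the other) is exactly the standard route, and your key observation that $d_n$ becomes identically $1$ past a prefix $\sigma_n$ with $[\sigma_n]\subseteq\mathcal{U}_n$ correctly forces $\liminf_m d(X\upharpoonright m)=\infty$. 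Where you genuinely deviate is (iv) $\Rightarrow$ (iii): the textbook proof constructs a ``savings account'' martingale that converts $\limsup$-success into $\liminf$-success directly, whereas you route the contrapositive through the test characterization ($\limsup$-success puts $X$ in a ML-test, and your (ii) $\Rightarrow$ (iv) construction then yields $\liminf$-divergence). This is a legitimate and arguably cleaner derivation, at the cost of not producing the savings martingale as a reusable tool. Two cosmetic normalizations are left implicit but are harmless: the test $\{Y:\exists k\, d(Y\upharpoonright k)>2^n\}$ has measure at most $d(\lambda)2^{-n}$ rather than $2^{-n}$, so one should shift indices by a constant, and the Kraft--Chaitin requests of length $|\sigma|-c$ should be truncated at $0$ for the finitely many $\sigma$ with $|\sigma|<c$.
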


	For a proof, see Theorems 6.3.2 and 6.3.4 in \cite{DH}, or Theorem 3.2.9 and Proposition 7.2.6 in \cite{N2009}.
	
	It is well-known that there exists a \emph{universal} ML-test, namely a ML-test $(\mathcal{U}_n)_{n \in \mathbb{N}}$ such that if $X \notin \cap_n \mathcal{U}_n$, then $X$ is Martin-L\"of random (see Theorem 6.2.5 in \cite{DH}, or Fact 3.2.4 in \cite{N2009}). Similarly, there exists a \emph{universal} lower-semicomputable martingale, namely a lower-semicomputable~martingale $\mathbf{d}$ such that $\liminf_n \mathbf{d}(X \upharpoonright n) < \infty$ if and only  if $X$ is Martin-L\"of random (see Corollary 6.3.5 in~\cite{DH}, or Theorem 7.2.8 in~\cite{N2009}).

	Finally, we recall that relativized ML-randomness can be equivalently defined by relativizing all three approaches above in the obvious way.

	\subsection{Depth}
	In many cases, the same information may be organized in different ways, making it more or less useful for various computational purposes. The notion of \emph{depth} was introduced by Bennett as an attempt to separate useful and organized information from random noise and trivial information.
	
	\begin{Definition}
		$X \in \sequences$ is \emph{deep} if, for every computable time-bound $t: \mathbb{N} \rightarrow \mathbb{N}$ 
		
		\[
			\lim\limits_{n \rightarrow \infty} K^t(X \upharpoonright n) - K(X \upharpoonright n) = +\infty.
		\]

		A set which is not deep is called \emph{shallow}.
	\end{Definition}

	By Lemma~\ref{lem:time-bounded}, we can equivalently define a set $X$ to be deep if and only if, for every computable time-bound $t$, 
	$$\lim\limits_{n \rightarrow \infty} \frac{\mathbf{m}(X \upharpoonright n)}{\mathbf{m}^t(X \upharpoonright n)} = + \infty,$$
	or also, if and only if, for every computable semimeasure $m$, 
	$$\lim\limits_{n \rightarrow \infty} \frac{\mathbf{m}(X \upharpoonright n)}{m(X \upharpoonright n)} = +\infty.$$
	
	Let us recall the main properties of logical depth: for proofs of the statements below, see \cite{JLL}. 
	
	\begin{Proposition}
		\begin{enumerate}[(i)]
			\item (Slow Growth Law) Let $X \leq_{tt} Y$ and $X$ be deep. Then $Y$ is deep. 
			\item Every ML-random set is shallow. 
			\item Every computable set is shallow. 
			\item The halting problem $\hp$ is deep.
		\end{enumerate}
	\end{Proposition}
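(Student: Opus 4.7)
The plan is to handle the four parts via the semimeasure characterization of depth implicit in Lemma~\ref{lem:time-bounded}: $X$ is shallow iff there is a computable discrete semimeasure $m$ and a constant $c$ with $m(X \upharpoonright n) \geq 2^{-c} \mathbf{m}(X \upharpoonright n)$ for infinitely many $n$. Part (iii) is immediate along a sparse ``simple'' sequence such as $n_k = 2^k$: for computable $X$, $K(X \upharpoonright n) = K(n) + O(1)$ (since $X \upharpoonright n$ is computable from $n$ and $n$ is the length of $X \upharpoonright n$); taking a computable time bound $t$ that simulates both the computation of $X$ and the standard fast decoding of $n$, one also gets $K^t(X \upharpoonright n) = K^t(n) + O(1)$. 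Along $n_k$, the optimal description of $n_k$ decodes within time $t(n_k)$, so $K^t(n_k) = K(n_k) + O(1)$, yielding $K^t(X \upharpoonright n_k) - K(X \upharpoonright n_k) = O(1)$.

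For (ii), i.e., ML-random $X$ shallow, I would combine $K(X \upharpoonright n) \geq^+ n$ (Levin-Schnorr) with the ``print'' upper bound $K^{t_0}(X \upharpoonright n) \leq^+ n + K^{t_0}(n)$. Along the simple sequence $n_k = 2^k$ one also needs $K(X \upharpoonright n_k) \geq^+ n_k + K(n_k)$ infinitely often; I would establish this via a Solovay-type argument using the summable family $\{V_{k,c}\}_{k,c}$ with $V_{k,c} = \{Z : K(Z \upharpoonright n_k) < n_k + K(n_k) - c\}$ after restricting to a sufficiently sparse computable subsequence of $\{n_k\}$ so that $\sum_k 2^{K(n_k)}$ converges. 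The two estimates then combine, for infinitely many $n_k$, to $K^{t_0}(X \upharpoonright n_k) - K(X \upharpoonright n_k) = O(1)$, proving shallowness; the main obstacle is controlling the Solovay-test convergence.

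For (iv), I argue by contradiction. If $\hp$ were shallow, then for some computable $t$ and constant $c$, $K^t(\hp \upharpoonright n) \leq K(\hp \upharpoonright n) + c$ for infinitely many $n$. Since $\hp$ is c.e., $K(\hp \upharpoonright n) \leq^+ 2 \log n$, so $K^t(\hp \upharpoonright n) = O(\log n)$ along this infinite set. Simulating $\mathcal{U}$ for time $t(n)$ on every candidate short description of length $O(\log n)$ yields, uniformly in $n$, a polynomial-size list $L_n \ni \hp \upharpoonright n$ computable in some computable time bound. The main obstacle is to convert this into a contradiction; I would use a Barzdin-style diagonalization showing that any uniformly computable family of such short lists covering infinitely many prefixes of $\hp$ would imply that $\hp$ itself is computable.

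For (i) (the Slow Growth Law), I proceed by contrapositive. Suppose $Y$ is shallow, witnessed by a computable semimeasure $\nu$ with $\nu(Y \upharpoonright \ell) \geq^\times \mathbf{m}(Y \upharpoonright \ell)$ for infinitely many $\ell$, and let $\Phi$ be a tt-reduction from $X$ to $Y$ with computable use function $f$. Define
\[
\mu(\sigma) \;=\; \frac{1}{\zeta(2)\,(|\sigma|+1)^{2}} \sum_{\tau \in \mathbf{2}^{f(|\sigma|)} \,:\, \Phi^{\tau} \upharpoonright |\sigma| \,=\, \sigma} \nu(\tau),
\]
which is a computable discrete semimeasure since $\sum_\sigma \mu(\sigma) \leq \zeta(2)^{-1} \sum_n (n+1)^{-2} = 1$ (using $\sum_{\tau \in \mathbf{2}^{f(n)}} \nu(\tau) \leq 1$). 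The contribution of $\tau = Y \upharpoonright f(n)$ alone gives $\mu(X \upharpoonright n) \geq \nu(Y \upharpoonright f(n))/(\zeta(2)(n+1)^2)$. The main obstacle is to show $\mu(X \upharpoonright n) \geq^\times \mathbf{m}(X \upharpoonright n)$ for infinitely many $n$: the naive bound $\mathbf{m}(X \upharpoonright n) \geq^\times \mathbf{m}(Y \upharpoonright f(n))$ coming from $\Phi$ goes in the wrong direction, so one must exploit finer structure (in particular, the sizes of the fibers $\Phi^{-1}(X \upharpoonright n)$) and pick the witness subsequence judiciously to absorb the polynomial loss.
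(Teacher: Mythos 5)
The paper itself does not prove this proposition; it points to the survey literature (\cite{JLL}), so your attempt has to be judged on its own terms. Each of your four parts has a genuine gap, and in (ii) and (iii) the gap is a false intermediate claim rather than a missing detail. For (iii): you assert that along $n_k=2^k$ ``the optimal description of $n_k$ decodes within time $t(n_k)$''. If this held for all $k$ it would give a computable function agreeing with $K(2^k)=^+K(k)$ up to an additive constant everywhere, which is impossible; and there is no reason it should hold infinitely often along a sequence you chose in advance. The correct repair is the Solovay-function property of $K^t$ (H\"olzl--Kr\"aling--Merkle, already invoked elsewhere in the paper): for superlinear $t$ there are infinitely many $n$ --- not of your choosing --- with $K^t(n)\le^+ K(n)$, and combining this with $K(n)\le^+ K(X\upharpoonright n)$ and $K^{t'}(X\upharpoonright n)\le^+ K^t(n)$ finishes the argument. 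For (ii) the situation is worse: your target inequality $K(X\upharpoonright n)\ge^+ n+K(n)$ for infinitely many $n$ is, by a theorem of Miller, \emph{equivalent to $2$-randomness} of $X$; it fails for ML-random sets such as $\Omega$, for which $n+K(n)-K(\Omega\upharpoonright n)\to\infty$. Consequently the whole print-program strategy can only ever show that $2$-random sets are shallow. Your Solovay test cannot be rescued either: $\sum_k 2^{K(n_k)}$ never converges (each term is at least $1$), and the true bound $\mu(V_{k,c})\le^\times 2^{-c}$, uniform in $k$, makes $\sum_k\mu(V_{k,c})$ diverge for every subsequence. The actual proof that ML-random sets are shallow is genuinely more delicate and does not reduce to the print program.

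For (i), you correctly identify that the push-forward semimeasure argument stalls: the inequality $\mathbf{m}(X\upharpoonright n)\ge^\times \mathbf{m}(Y\upharpoonright f(n))/(n+1)^2$ goes the wrong way, and no choice of witness subsequence fixes this (nor does it address the possibility that the good lengths for $Y$ avoid the range of $f$). The standard proof runs forward and uses that \emph{both} complexities transfer in the same direction: letting $n(m)=\max\{n: f(n)\le m\}$, any $t$-fast program for $Y\upharpoonright m$ yields a $t'$-fast program for $X\upharpoonright n(m)$ of the same length up to a constant, and likewise without time bounds, whence $K^t(Y\upharpoonright m)-K(Y\upharpoonright m)\ge K^{t'}(X\upharpoonright n(m))-K(X\upharpoonright n(m))-O(1)\to\infty$ when $X$ is deep. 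For (iv), the list-decoding setup is the right start, but the proposed endpoint ``$\hp$ is computable'' is not reachable (and is not the standard contradiction): membership of $\hp\upharpoonright n$ in small quickly computable lists for infinitely many $n$ does not by itself yield computability of $\hp$. The usual argument instead uses a fast description of $\hp\upharpoonright n$ of length $O(\log n)$ to decide, within a computable time bound, the halting of all short programs, and hence to quickly output a string whose prefix-free complexity exceeds the length of that description --- a direct contradiction. As it stands, none of the four parts is established.
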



\section{Relativized depth}

	We propose a relativized notion of depth, in order to better understand the power of oracles in organizing information. 
	
	\begin{Definition}
		Given an oracle $A \in \sequences$, we say that $X \in \sequences$ is $A$-\emph{deep} if, for every computable time-bound $t$, 
		\[
		 	\lim\limits_{n \rightarrow \infty} K^{A,t}(X \upharpoonright n) - K^A(X \upharpoonright n) = +\infty
		\] 
		Otherwise, we say that $X$ is $A$-\emph{shallow}.
	\end{Definition} 

	The choice of focusing only on computable time-bounds, instead of considering also $A$-computable ones, in the above definition is mainly due to obtain a relativized version of the Slow Growth Law which keeps working with $tt$-reductions. In other words, we want to stick to ``fast'' oracle computations.
 
	Indeed, we notice that Lemma \ref{lem:time-bounded} relativizes in the following way.	
	
\begin{Lemma}\label{lem:relativized-time-bounded}
Let $A \in \sequences$. For any given computable time bound~$t$, $\mathbf{m}^{A,t}$ is a semimeasure and $\mathbf{m}^{A,t} \leq_{tt} A$. Conversely, if $m$ is a semimeasure $tt$-below $A$, there exists a computable time bound~$t$ such that $m \leq^\times \mathbf{m}^{A,t}$. In particular, for any given time bound~$t$, $2^{-K^{A,t}}$ is a semimeasure that is $tt$-below $A$ , hence there exists a computable time bound $t'$ such that $2^{-K^{A,t}} \leq^\times \mathbf{m}^{A,t'}$ (or equivalently, $-\log m^{A,t'} \leq^+ K^{A,t}$). 
\end{Lemma}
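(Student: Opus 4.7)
The plan is to mirror step by step the proof of Lemma~\ref{lem:time-bounded}, relativizing to the oracle $A$ and replacing ``computable'' by ``$tt$-below $A$'' where appropriate, while taking care that the quantitative time bounds obtained remain genuinely computable rather than merely $A$-computable. I would split the argument according to the three sentences of the statement.

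For the first sentence, the semimeasure property is immediate from $\mathbf{m}^{A,t}(\sigma) \le \mathbf{m}^A(\sigma)$ and $\sum_\sigma \mathbf{m}^A(\sigma) \le 1$. The $tt$-reduction $\mathbf{m}^{A,t} \leq_{tt} A$ amounts to observing that simulating $\mathcal{U}^A$ for $t(|\sigma|)$ steps on all candidate codes and summing $2^{-|\tau|}$ over halting $\tau$'s is a total procedure whose running time and number of oracle queries are both bounded by a computable function of $|\sigma|$: any halting computation of length at most $t(|\sigma|)$ can read at most that many bits of both input and oracle.

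For the converse, given a semimeasure $m$ with $m \leq_{tt} A$, observe that $m$ is in particular an $A$-lss, so universality of $\mathbf{m}^A$ yields a constant $c$ with $m \leq c \cdot \mathbf{m}^A$. As in the unrelativized case, define $s(n)$ to be the least stage such that $c \cdot \mathbf{m}^A_{s(n)}(\sigma) > m(\sigma)$ for every $\sigma$ of length $n$, and aim to majorize $s(n)$ by a computable function. The critical observation is that the hypothesis $m \leq_{tt} A$ gives a computable time bound $\tau$ within which each $m(\sigma)$ is computed from $A$; hence an enumerator of $m$ that sweeps through all strings of length $n$ terminates within at most $2^n \tau(n)$ $A$-steps. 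A fixed constant-length preamble codes this enumerator into $\mathcal{U}^A$, and incorporating the (at most quadratic) simulation overhead of $\mathcal{U}$ yields a computable function $t(n)$ by which stage every contribution of $m$ on length-$n$ strings has been absorbed into $\mathbf{m}^A_s$. The ``in particular'' clause then follows by applying the converse to $m = 2^{-K^{A,t}}$: this is a semimeasure by Kraft's inequality (the domain of $\mathcal{U}^A$ is prefix-free) and is $tt$-below $A$ because computing $K^{A,t}(\sigma)$ only requires inspecting candidate codes of length at most $t(|\sigma|)$, a search that runs in time computable in $|\sigma|$.

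The main obstacle is extracting from the hypothesis $m \leq_{tt} A$ a time bound $t$ that is genuinely \emph{computable}, not merely $A$-computable. If we only had $m \leq_T A$, the natural upper bound on $s(n)$ would itself depend on $A$ and the analogue of the lemma would fail; it is precisely the totality and the computable running time of a $tt$-reduction that convert this $A$-dependent quantity into a computable function of $n$. This is also the conceptual reason the paper works with $tt$-reductions throughout its relativized theory of depth.
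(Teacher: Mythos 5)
Your three-part decomposition matches the paper's, and your arguments for the first sentence and the ``in particular'' clause are essentially the paper's (a time-bounded approximation reads boundedly many oracle bits in computably bounded time, hence is tt-below $A$). For the converse, however, you take a genuinely different route. The paper's proof rests on two points: that $\mathbf{m}^A$ must be defined \emph{uniformly} in the oracle, via a functional $\Phi(B,\cdot)$ whose universality constant $c$ works simultaneously for every oracle $B$ making the competing total functional $\Psi^B$ a semimeasure; and then an effective compactness argument --- the $\Pi^0_1$ class of oracles $B$ with $\Psi^B$ a semimeasure yet $c\cdot\Psi^B(n) > \mathbf{m}^B(n)$ is empty, so one can \emph{compute} a stage $t(n)$ by which its approximating tree dies out. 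The bound is computable precisely because the quantification ranges over the whole compact space $\sequences$ rather than over the single oracle $A$; this is the paper's answer to the obstacle you correctly single out. You instead extract the computable bound by tracking running times through the universal enumeration: the tt-reduction delivers $m(\sigma)$ from $A$ in computable time, and the overhead of folding $m$ into the universal object is computable. That mechanism is sound, but as written it has one soft spot: routing the absorption through a ``constant-length preamble into $\mathcal{U}^A$'' directly yields $m \leq^\times 2^{-K^{A,t''}}$ for a computable $t''$, and passing from there to $m \leq^\times \mathbf{m}^{A,t'}$ is exactly the ``in particular'' clause, i.e.\ an instance of the converse you are in the middle of proving. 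To avoid this circularity you should either fix $\mathbf{m}^A$ to be the machine-based semimeasure from the outset, or argue the absorption at the level of a dovetailed weighted sum $\sum_e 2^{-e}\Psi_e^A$ of total functionals, where the index $e_0$ of $\Psi$ plus its computable time bound immediately give a computable stage at which $\mathbf{m}^A_s(\sigma) \geq 2^{-e_0} m(\sigma)$. With that repair your argument is a correct, more hands-on alternative to the paper's compactness proof; the paper's version buys independence from the internal construction of $\mathbf{m}^A$, yours buys explicit control of the time bound $t$.
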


\begin{proof}
The first part of the lemma is immediate, provided one recalls that the relativized version $\mathbf{m}^A$ of the universal lower semicomputable semimeasure can (and should!) be defined uniformly, that is there is a two-place function $\Phi: \sequences \times \N \rightarrow \mathbb{R}^+$  such that 

\begin{itemize}
\item[(i)] The set $\{(A,n,q) \in \sequences \times \N \times \Q \mid \Phi(A,n)>q\}$ is $\Sigma^0_1$
\item[(ii)] For every $A$, $n \mapsto \Phi(A,n)$ is a semi-measure  
\item[(iii)] For any other function $\Psi$ with property (i), there exists a constant $c>0$ such that for each~$B$, if $n \mapsto \Psi(B,n)$ is a semimeasure, then $\Phi(B,n)>c \cdot \Psi(B,n)$ for all~$n$. 
\end{itemize}

(see for example~\cite{Gacs-notes} for a proof of the existence of such a $\Phi$) and thus one may \emph{define} $\mathbf{m}^A(n)$ to be $\Phi(A,n)$ for all~$A$ and $n$ (and $\mathbf{m}^{A,t}$ its time-bounded version, canonically defined by adding a time bound on~$\Phi$). With this definition, we do get $\mathbf{m}^{A,t} \leq_{tt} A$.

Conversely, suppose $m$ is a semimeasure which is $tt$-below $A$. Let $\Psi$ be the total functional such that $m = \Psi^A$. By item (iii) above, let $c$ be a constant such that $\mathbf{m}^B(n)>c \cdot \Psi^B(n)$ for all~$n$ whenever $\Psi^B$ is a semimeasure. This means that for all~$n$ the $\Pi^0_1$ class 
\[
\left\{B \mid \Psi^B \text{ is a semimeasure and } c \cdot \Psi^B(n) > \mathbf{m}^B(n)\right\} 
\]
must be empty, hence by effective compactness of $\sequences$ one can effectively compute some $t(n)$ such that 
\[
\left \{B \mid \Psi^B \text{ is a semimeasure and } c \cdot \Psi^B(n) > \mathbf{m}^B(n)[t(n)] \right\} =\emptyset 
\]
Since $\Psi^A=m$ is a semimeasure, it follows that $c \cdot m \leq  \mathbf{m}^{A,t}$. 
\end{proof}

	It is now easy to see that the following properties holds for relativized depth, by simply relativizing the proofs of the corresponding properties of unrelativized depth.
	
	\begin{Proposition} \label{basic_properties}
		Let $A \in \sequences$
		\begin{enumerate}[(i)]
			\item (Relativized SGL) Let $X \leq_{tt} Y$ and $X$ be $A$-deep. Then $Y$ is $A$-deep.
			\item Every $A$-ML-random set is $A$-shallow.
			\item Every $A$-$tt$-computable set is $A$-shallow.
			\item $A^{\prime}$ is $A$-deep.
		\end{enumerate}
	\end{Proposition}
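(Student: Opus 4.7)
The plan is to adapt the classical arguments for the corresponding properties of unrelativized depth (see \cite{JLL}), replacing every use of ``computable'' by ``$tt$-below~$A$''. The main tool is Lemma~\ref{lem:relativized-time-bounded}, which yields the semi-measure formulation of $A$-depth: $X$ is $A$-deep iff for every semi-measure~$m$ that is $tt$-below~$A$, $\mathbf{m}^A(X\upharpoonright n)/m(X\upharpoonright n)\to +\infty$.

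For~(i), given a total functional~$\Phi$ realizing $X\leq_{tt} Y$ with computable use function~$f$, and a semi-measure~$m$ (which is $tt$-below~$A$) witnessing $A$-shallowness of~$Y$, one constructs the push-forward semi-measure~$m'$ aimed at $X$-prefixes by aggregating the $m$-mass of preimages $\sigma$ with $\Phi^\sigma\succeq\tau$, with suitable renormalization to ensure $\sum_\tau m'(\tau)\leq 1$. Since $\Phi$ is computable independently of~$A$, $m'$ remains $tt$-below~$A$; moreover $m'(X\upharpoonright n)\geq^\times m(Y\upharpoonright f(n))$ while $\mathbf{m}^A(X\upharpoonright n)\geq^\times \mathbf{m}^A(Y\upharpoonright f(n))$, so $m'$ witnesses $A$-shallowness of~$X$.

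For~(ii) and~(iii), the common idea is that there exist infinitely many ``simple'' lengths---for instance the iterated powers $n_k=2^{2^k}$---on which both $K^A(n)$ and $K^{A,t}(n)$ are bounded by an absolute constant, for a suitable computable~$t$. For~(ii), if $X$ is $A$-ML-random, the relativized Chaitin identity gives $K^A(X\upharpoonright n)=^+ n+K^A(n)$, while a self-delimited literal encoding yields $K^{A,t'}(X\upharpoonright n)\leq^+ n+K^{A,t}(n)$ for appropriate computable $t,t'$; evaluating at the simple lengths~$n_k$ makes the depth-defining difference $K^{A,t'}(X\upharpoonright n_k)-K^A(X\upharpoonright n_k)$ remain bounded. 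For~(iii), the relativized Lemma~\ref{K_lemma} similarly gives $K^A(X\upharpoonright n)\leq^+ K^A(n)$ and $K^{A,t'}(X\upharpoonright n)\leq^+ K^{A,t}(n)$ whenever $X\leq_{tt} A$, and the same evaluation concludes.

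For~(iv), $A'\upharpoonright n$ is $A$-computable from the pair $(n, |A'\cap[0,n)|)$, giving $K^A(A'\upharpoonright n)\leq^+ 2\log n$. Were $A'$ to be $A$-shallow via some computable~$t$, then $K^{A,t}(A'\upharpoonright n)=O(\log n)$ infinitely often; the resulting polynomial-size list of $A$-$tt$-enumerable candidates for $A'\upharpoonright n$, combined with the $A$-c.e.~structure of~$A'$, can be exploited via the standard argument to produce an $A$-$tt$-computation of~$A'$, contradicting $A'\not\leq_{tt} A$. The main technical obstacle throughout is bookkeeping rather than logic: one must verify that each computable object in the classical proofs remains $tt$-below~$A$ after relativization, and this is precisely what Lemma~\ref{lem:relativized-time-bounded} ensures.
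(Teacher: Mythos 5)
Your overall strategy (relativize the classical proofs from \cite{JLL}, using Lemma~\ref{lem:relativized-time-bounded} to track what stays $tt$-below~$A$) is exactly what the paper intends, but several of the classical arguments you reconstruct are not the actual ones, and as written they fail. In~(i), the two inequalities you display, $m'(X\upharpoonright n)\geq^\times m(Y\upharpoonright f(n))$ and $\mathbf{m}^A(X\upharpoonright n)\geq^\times \mathbf{m}^A(Y\upharpoonright f(n))$, are both \emph{lower} bounds by the same $Y$-side quantity, so they cannot combine to give the \emph{upper} bound $\mathbf{m}^A(X\upharpoonright n)\leq^\times m'(X\upharpoonright n)$ that witnessing $A$-shallowness of~$X$ requires. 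For that you would need the reverse inequality $\mathbf{m}^A(X\upharpoonright n)\leq^\times \mathbf{m}^A(Y\upharpoonright f(n))$, i.e.\ $K^A(Y\upharpoonright f(n))\leq^+ K^A(X\upharpoonright n)$, which fails badly in general (take $X$ computable and $Y$ ML-random relative to~$A$); this is precisely why the Slow Growth Law is not a formality: a push-forward semimeasure compared at corresponding lengths says nothing when $K^A(Y\upharpoonright f(n))\gg K^A(X\upharpoonright n)$. (Separately, the lengths witnessing $Y$'s shallowness need not be of the form $f(n)$.)

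In~(ii) and~(iii) the ``simple lengths'' device does not exist: since $\sum_n 2^{-K^A(n)}\leq 1$, $K^A(n)\to\infty$ along \emph{every} infinite set of integers, so neither $K^A(n_k)$ nor $K^{A,t}(n_k)$ is bounded on $n_k=2^{2^k}$ or anywhere else. What these arguments actually need is that the \emph{difference} $K^{A,t}(n)-K^A(n)$ is bounded for infinitely many~$n$, i.e.\ that $K^{A,t}$ is a Solovay function relative to~$A$ --- a genuinely nontrivial fact whose unrelativized form is the result of \cite{HKM} quoted in Section~5 --- and those infinitely many~$n$ cannot be prescribed in advance. Likewise the ``relativized Chaitin identity'' $K^A(X\upharpoonright n)=^+ n+K^A(n)$ is false for $A$-ML-random~$X$: requiring $K^A(X\upharpoonright n)\geq n+K^A(n)-O(1)$ infinitely often characterizes $2$-randomness relative to~$A$ (Miller), and already $\Omega$ is ML-random but satisfies $n+K(n)-K(\Omega\upharpoonright n)\to\infty$. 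Finally, in~(iv) the appeal to ``the standard argument'' hides the real work: one must convert an infinitely-often polynomial-size candidate list for $A'\upharpoonright n$ into a total $tt$-reduction without knowing which lengths are good. The paper's own proof is a one-line deferral to the (relativizations of the) arguments in \cite{JLL}; it is precisely your reconstruction of those arguments that contains the gaps.
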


	Next theorem shows how relativized depth is preserved when considering different oracles.
	
	\begin{Theorem} \label{preservingdepth}
		Let $A, B \in \sequences$ such that $A \leq_ {tt} B$ and $A \equiv_T B$. Then every $B$-deep set is also $A$-deep.
	\end{Theorem}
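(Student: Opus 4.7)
The plan is to reduce $A$-depth of $X$ directly to $B$-depth of $X$ by combining the two parts of Lemma~\ref{K_lemma} applied to the two hypotheses. Fix a computable time bound $t$; we want to show that $K^{A,t}(X \upharpoonright n) - K^A(X \upharpoonright n) \to \infty$.

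First, I would exploit $A \leq_{tt} B$. By Lemma~\ref{K_lemma}(ii), there is a computable time bound $t'$ such that
\[
K^{B,t'}(\sigma) \leq^+ K^{A,t}(\sigma)
\]
for all $\sigma$. This handles the time-bounded side, bounding $K^{A,t}$ from below by $K^{B,t'}$ up to an additive constant.

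Next, I would exploit $A \equiv_T B$. Applying Lemma~\ref{K_lemma}(i) in both directions gives $K^A(\sigma) =^+ K^B(\sigma)$ for all $\sigma$; in particular $K^B(\sigma) \leq^+ K^A(\sigma)$, i.e., $-K^A \leq^+ -K^B$. Combining the two displayed inequalities,
\[
K^{A,t}(X \upharpoonright n) - K^A(X \upharpoonright n) \geq^+ K^{B,t'}(X \upharpoonright n) - K^B(X \upharpoonright n).
\]
Since $X$ is $B$-deep and $t'$ is a computable time bound, the right-hand side tends to $+\infty$, so the left-hand side does too. As $t$ was arbitrary, $X$ is $A$-deep.

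There is no real obstacle here; the proof is essentially an exercise in matching each hypothesis to the appropriate clause of Lemma~\ref{K_lemma}. The only subtle point worth flagging is the asymmetry in the hypotheses: we use $A \leq_{tt} B$ only to transfer \emph{time-bounded} codes from $A$ to $B$ (this is the direction where $tt$-reducibility genuinely matters, since a mere Turing reduction would not preserve a computable time bound), whereas the reverse inequality on the untimed complexities requires only $B \leq_T A$, which is supplied by $A \equiv_T B$. This is exactly why the theorem needs both a $tt$-reduction one way and Turing equivalence overall.
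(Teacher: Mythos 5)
Your proof is correct and follows essentially the same route as the paper: both arguments combine Lemma~\ref{K_lemma}(ii) (using $A \leq_{tt} B$ to pass from $K^{A,t}$ to some $K^{B,t'}$) with Lemma~\ref{K_lemma}(i) (using $A \equiv_T B$ to identify $K^A$ and $K^B$ up to a constant). The only difference is presentational: the paper argues contrapositively (an $A$-shallow set is $B$-shallow), while you chain the same two inequalities directly to transfer the divergence of the complexity gap from $B$ to $A$.
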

	\begin{proof}
		Let $X \in \sequences$ be $A$-shallow. Hence, there is a computable time-bound $t$ such that 
		$$\left( \io n \right)\left[K^{A,t}(X \upharpoonright n) =^+ K^A(X \upharpoonright n)\right].$$
		
		Since $A \equiv_T B$, by Lemma \ref{K_lemma} (i), we get
		$$\left( \forall n \right)\left[K^A(X \upharpoonright n) =^+ K^B(X \upharpoonright n)\right].$$
		Moreover, by Lemma \ref{K_lemma} (ii), since $A \leq_{tt} B$, we also have that 
		$$\left( \forall n \right)\left[ K^{B,t'}(X \upharpoonright n) \leq^+ K^{A,t}(X \upharpoonright n) \right].$$
		
		Hence, we get 
		$$\left( \io n \right)\left[K^{B,t'}(X \upharpoonright n) \leq^+ K^{A,t}(X \upharpoonright n) =^+ K^A(X \upharpoonright n) =^+ K^B(X \upharpoonright n)\right],$$
		so that $X$ is $B$-shallow.	
	\end{proof}

	Notice that usually, in computability theory, relativizing a class $\mathcal{C}$ defines a class $\mathcal{C}^A$ such that either $\mathcal{C}^A \subseteq \mathcal{C}$ (e.g., when $\mathcal{C}$ is the class of ML-random sets) or $\mathcal{C} \subseteq \mathcal{C}^A$ (e.g., when $\mathcal{C}$ is the class of computable sets), for all oracles $A$. 
	
	Being defined in terms of two quantities which decrease mutually independently when an oracle is applied, this is not the case of the classes of deep and shallow sets. A priori, for an oracle $A$, we have four possible different scenarios: 
	\begin{enumerate}
		\item The classes of $A$-deep sets and deep sets are incomparable, meaning that there are both shallow but $A$-deep sets and deep but $A$-shallow sets. In section \ref{hp-depth}, we show that $\hp$ is an example for this scenario.
		\item All deep sets remain deep relative to $A$, but there are shallow sets which look deep relative to $A$, so that depth implies $A$-depth, but the reverse implication fails: Section \ref{MLR} is devoted to show that this is the case of ML-random oracles.
		\item All shallow sets remain shallow relative to $A$, but there are deep sets which look shallow relative to $A$, hence $A$-depth implies depth, but the converse does not hold. 
		\item The class of $A$-deep sets and the one of deep sets coincide.
	\end{enumerate}
	
	In Section \ref{K-trivial}, we observe that $K$-trivial oracles are examples of either scenario~3 or 4. However, while the latter obviously applies to all computable sets, we do not know which of them holds for uncomputable K-trivial oracles. Notice that, a priori, different $K$-trivial oracles may give different answers.

	
\section{$\hp$-depth} \label{hp-depth}
	In this section, we build a $\Delta_2^0$ set which is $\hp$-deep but ML-random, hence shallow. This construction shows, in particular:
	\begin{enumerate}[(i)]
		\item There are oracles $A$ such that some $A$-computable sets are deep. (Notice that, by Proposition \ref{basic_properties} (iii), this is not the case of $A$-$tt$-computable sets, namely sets computable with oracle $A$ within some computable time-bound). 
		\item There are oracles $A$ such that the class of $A$-deep sets is incomparable with the corresponding unrelativized class: indeed, every c.e.~deep set (including $\hp$) is clearly $\hp$-shallow (as $tt$-below $\hp$), while the set we construct in next theorem is $\hp$-deep but shallow.
	\end{enumerate}

	The construction is based on the following technical lemma (stated in the following form in \cite[Remark 3.1]{MM}, see also \cite{G}).
	
	\begin{Lemma}[Space Lemma]\label{SL}

For a rational~$\delta>1$ and positive integer~$k$, let $l(\delta,k) = \lceil \frac{k+1}{1-1/\delta} \rceil$. 
		For every martingale $d$ and $\sigma \in \strings$, 
		$$\left| \left\lbrace \tau \in \mathbf{2}^{l(\delta,k)}: \ d(\sigma\tau) < \delta d(\sigma) \right\rbrace \right| \geq k.$$
	\end{Lemma}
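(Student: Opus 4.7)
The plan is to use the single conservation law available for martingales: by induction on $n$, starting from the defining identity $d(\sigma 0)+d(\sigma 1)=2d(\sigma)$, one obtains
\[
\sum_{\tau \in \mathbf{2}^n} d(\sigma\tau) = 2^n d(\sigma) \qquad \text{for all } \sigma \in \strings \text{ and } n \geq 0.
\]
I would first write this identity down for $n = l(\delta,k)$ and then argue by contradiction; this is really the whole idea, the rest is arithmetic chosen to match the formula $l(\delta,k) = \lceil (k+1)/(1-1/\delta) \rceil$.

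Assume $d(\sigma)>0$ (the case $d(\sigma)=0$ forces $d(\sigma\tau)=0$ for every $\tau$, so the strict inequality in the statement is vacuous and the claim is either trivial or requires reading ``$<$'' as ``$\leq$''). Write $l = l(\delta,k)$ and suppose, for contradiction, that strictly fewer than $k$ strings $\tau \in \mathbf{2}^l$ satisfy $d(\sigma\tau) < \delta d(\sigma)$. Then at least $2^l-(k-1) = 2^l-k+1$ strings $\tau$ satisfy $d(\sigma\tau)\geq \delta d(\sigma)$, and feeding this into the conservation identity gives
\[
2^l d(\sigma) \;=\; \sum_{\tau \in \mathbf{2}^l} d(\sigma\tau) \;\geq\; (2^l - k + 1)\,\delta\, d(\sigma).
\]
Dividing by $d(\sigma)$ and rearranging yields $2^l(1-1/\delta) \leq k-1$, equivalently $2^l \leq (k-1)/(1-1/\delta)$.

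To close the argument, I would observe that the choice of $l$ makes this impossible: since $2^l \geq l$ for every $l \geq 0$ and $l \geq (k+1)/(1-1/\delta)$ by definition, we get
\[
2^l \;\geq\; l \;\geq\; \frac{k+1}{1-1/\delta} \;>\; \frac{k-1}{1-1/\delta},
\]
contradicting the previous inequality. Hence at least $k$ of the $2^l$ extensions $\tau$ must satisfy $d(\sigma\tau) < \delta d(\sigma)$, as claimed. There is no real obstacle here: the whole argument is a single averaging step, and the somewhat odd-looking formula for $l(\delta,k)$ is precisely what is needed to force $2^l(1-1/\delta)>k-1$. The only mildly delicate point is the $d(\sigma)=0$ boundary case, which has no effect on the applications later in the paper.
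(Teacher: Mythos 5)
Your proof is correct, and it is the standard argument for this lemma: the conservation identity $\sum_{\tau\in\mathbf{2}^n} d(\sigma\tau)=2^n d(\sigma)$ followed by a Markov-type count of the extensions on which the martingale grows by a factor $\delta$. Note that the paper does not prove the Space Lemma at all but imports it from the cited reference, so there is no internal proof to compare against. Your handling of the boundary case is also right, and worth keeping: if $d(\sigma)=0$ then every extension has value $0$, the set in the statement is empty, and the lemma as literally written (with strict inequality) fails for $k\geq 1$; this is harmless in the application because the lemma is only ever applied to a universal lower-semicomputable martingale $\mathbf{d}$, which multiplicatively dominates the constant martingale and hence is bounded away from~$0$.

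One substantive remark. Your argument closes only because you invoke the crude bound $2^l\geq l$ to reconcile the hypothesis $l\geq (k+1)/(1-1/\delta)$ with the needed inequality on $2^l$. Run directly rather than by contradiction, the same averaging step shows that the number of $\tau\in\mathbf{2}^l$ with $d(\sigma\tau)<\delta d(\sigma)$ is at least $2^l(1-1/\delta)$, so the conclusion already holds for $l=\lceil \log\frac{k+1}{1-1/\delta}\rceil$. This logarithmic version is evidently the one the paper intends: in Claim 3 of the proof of Theorem~\ref{XMLRandHPdeep}, the increment $l(\delta_{n+1},2^{n+1})$ is computed as $\lceil\log(\cdots)\rceil = n+O(\log n)$, which is exactly what yields $|\sigma_n|=n^2/2+O(n\log n)$; with $l(\delta,k)$ as printed in the lemma the increment would instead be of order $2^{n}n^2$ and Claim 3 would be false as stated (though the set constructed would still be $\hp$-deep, with weaker quantitative bounds). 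So the displayed formula for $l(\delta,k)$ is presumably missing a logarithm. Your proof is valid for the statement as printed, but I would encourage you to prove the sharper, logarithmic form, since that is what the rest of the section actually relies on; your own averaging step gives it with no extra work.
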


Let us now prove the main theorem of this section. 

	\begin{Theorem} \label{XMLRandHPdeep}
		There exists a set $X \in \Delta^0_2$ such that $X$ is ML-random and $X$ is $\hp$-deep.
	\end{Theorem}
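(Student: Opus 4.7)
The plan is to construct $X$ stage by stage as an increasing chain of prefixes $\sigma_0 \prec \sigma_1 \prec \cdots$ built uniformly in $\hp$ (so $X$ is automatically $\Delta^0_2$). At each stage $s$ I would apply the Space Lemma to the universal ML-martingale $\mathbf{d}$ with parameters $\delta_s = 1 + 2^{-s-2}$ and some large $k_s$, obtaining a set $T_s$ of at least $k_s$ extensions $\tau$ of length $l_s = l(\delta_s, k_s)$ with $\mathbf{d}(\sigma_s\tau) < \delta_s\,\mathbf{d}(\sigma_s)$. Because $\prod_s \delta_s$ converges, $\mathbf{d}$ stays bounded along the checkpoints $\sigma_s$, which via the liminf characterization of ML-randomness delivers that $X$ is ML-random. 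The freedom among the $k_s$ Space Lemma candidates is what I exploit to also force $\hp$-depth.

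For $\hp$-depth I would enumerate pairs $(t_e, L)$, with $t_e$ a computable time bound and $L \in \N$, so that every pair is revisited at every sufficiently late stage. At stage $s$, among the candidates in $T_s$ I pick $\tau_s$ so that for every active pair $(t_e, L)$ and every prefix $\rho$ of $\sigma_s\tau_s$ with $n_s < |\rho| \leq n_{s+1}$ one has $\mathbf{m}^{\hp, t_e}(\rho) < 2^{-L}\,\mathbf{m}^{\hp}(\rho)$; by Levin's identity $K =^+ -\log\mathbf{m}$ together with Lemma~\ref{lem:relativized-time-bounded}, this is (up to absorbable additive constants) the required gap $K^{\hp, t_e}(\rho) > K^{\hp}(\rho) + L$. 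The existence of such a $\tau_s$ hinges on a counting argument: since $\mathbf{m}^{\hp, t_e}$ is a semimeasure, the set $B_{e,L}$ of strings violating the inequality satisfies $\sum_{\rho \in B_{e,L}} \mathbf{m}^{\hp}(\rho) \leq 2^L$, which, combined with the trivial lower bound on $\mathbf{m}^{\hp}(\rho)$ for extensions of $\sigma_s$, caps the number of bad candidates in $T_s$. With $k_s$ chosen large enough relative to this cap (and correspondingly $l_s$ by the Space Lemma), some good candidate survives, and the oracle $\hp$ is enough both to enumerate $T_s$ and to test the diagonalization condition.

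The main obstacle is the counting for intermediate prefixes. Depth demands the gap to hold for \emph{every} large $n$, not merely at the checkpoint lengths $n_s$, so the construction must forbid bad prefixes at every length in the range $(n_s, n_{s+1}]$. Crudely bounding the number of bad extensions of $\sigma_s$ by (number of bad prefixes) $\times$ (extensions through each) tends to blow up past $k_s$; the careful choice of parameters must balance the stage length $l_s$ against the cumulative bad-prefix mass up to length $n_{s+1}$. A complementary trick I would try, to rescue this, is the truncation inequality $K^{\hp, t_e}(X\upharpoonright n) \geq K^{\hp, t_e'}(X\upharpoonright n_{s+1}) - (n_{s+1} - n) - O(\log n_{s+1})$ for a suitable computable $t_e'$, so that enforcing strong incompressibility only at $n_{s+1}$ propagates, at the price of a logarithmic factor, to all intermediate lengths. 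Once this combinatorics is in place, the verification is routine: $\mathbf{d}$ stays bounded along the checkpoints so $X$ is ML-random; $X \in \Delta^0_2$ by uniformity of the construction in $\hp$; and each $(t_e, L)$ is eventually always satisfied, so $X$ is $\hp$-deep.
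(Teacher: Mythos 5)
Your skeleton (finite extension method driven by the Space Lemma with $\prod_n \delta_n < \infty$, all effective in $\hp$) matches the paper's, and you correctly identify the intermediate-length problem as the crux. But both mechanisms you offer for forcing $\hp$-depth have genuine gaps. First, the framework ``enumerate pairs $(t_e,L)$ with $t_e$ a computable time bound'' is not effective, even relative to $\hp$: totality of $\phi_e$ is $\Pi^0_2$-complete, so there is no $\hp$-enumeration of the computable time bounds to diagonalize against. The standard repair --- and the one the paper uses --- is to fix a single $\Delta^0_2$ function $F$ dominating every computable function and to force, at each checkpoint, $K^{\hp,F}(\sigma_{n+1})>n$ by pure counting among the $2^{n+1}$ Space Lemma candidates (fewer than $2^{n+1}$ strings can have $K^{\hp,F}\le n$); since any computable $t$ satisfies $t\le F$ almost everywhere, this single requirement handles all time bounds at once. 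Relatedly, your counting bound $\sum_{\rho\in B_{e,L}}\mathbf{m}^{\hp}(\rho)\le 2^L$ is vacuous (the total mass is already $\le 1$), and the ``trivial lower bound on $\mathbf{m}^{\hp}(\rho)$'' only bounds the number of bad strings of length $m$ by roughly $2^{L+m}$, which exceeds $2^m$; so the direct diagonalization at every intermediate length cannot be salvaged by this estimate.

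Second, your rescue inequality points the wrong way. Padding $X\upharpoonright n$ up to $X\upharpoonright n_{s+1}$ costs $n_{s+1}-n$ bits, which can be the entire block length $l_s$; but the incompressibility you can guarantee at a checkpoint by counting among $k_s$ candidates is only about $\log k_s = l_s - O(\log s)$, so the resulting lower bound $\log k_s - l_s - O(\log)$ is negative. The price is not ``a logarithmic factor.'' The correct move is to truncate \emph{down}: $\sigma_s$ is recovered from $X\upharpoonright n$ by cutting to its first $|\sigma_s|$ bits, and $|\sigma_s|$ is computable from $s$, so $K^{\hp,t}(X\upharpoonright n)\ge K^{\hp,t'}(\sigma_s)-O(\log s)\ge s-O(\log s)$ for a suitable computable $t'$. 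Pairing this with $K^{\hp}(X\upharpoonright n)\le K^{\hp}(\sigma_{s+1})+K(n)=O(\log s)+O(\log n)$ (the construction is $\hp$-computable, so checkpoints have logarithmic $K^{\hp}$-complexity), and noting that the block lengths make $n\sim s^2/2$ so that $\log n=O(\log s)$, gives a gap of $s-O(\log s)\to\infty$. With these two corrections your argument becomes the paper's proof; as written, neither of your two routes to $\hp$-depth goes through.
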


\begin{proof}
Let $\mathbf{d}$ be a universal lower-semicomputable martingale and let $T: \mathbb{N} \rightarrow \mathbb{N}$ be a $\Delta^0_2$ function such that for any computable time bound~$t$ we have $\foralmostall n~t(n) \leq T(n)$. For $n>0$ let  $\delta_n = 1+1/n^2$.  

The construction of~$X$ will be done by a finite extension method, that is, we will build~$X$ as the limit of an increasing (w.r.t. the lexicographic order) sequence of strings~$(\tau_n)$, the construction being effective in $\emptyset'$.  We set $\sigma_0$ to be the empty string~$\lambda$. Having built $\sigma_n$, $\sigma_{n+1}$ is chosen as follows. By the Space Lemma, the set 
\[
A_{n+1} = \left\lbrace \tau \in \mathbf{2}^{l(\delta_{n+1},2^{n+1})}: \ \mathbf{d}(\sigma_n\tau) < \delta_{n+1} \cdot \mathbf{d}(\sigma) \right\rbrace
\]
has at least $2^{n+1}$ elements. Thus, there must exist some $\tau \in A_{n+1}$ such that $K^{\emptyset'}(\sigma_n \tau) > n$ and a fortiori there must exist some $\tau$ in $A_{n+1}$ such that $K^{\emptyset',F}(\sigma_n \tau) > n$. Since the latter is a $\emptyset'$-decidable property and $A_{n+1}$ is $\emptyset'$-c.e, one can $\emptyset'$-effectively find such a $\tau$ and set $\sigma_{n+1} = \sigma_n \tau$.

We verify that this construction works via a series of claims. \\

\emph{Claim 1: $X$ is $\Delta^0_2$}. \\

This is clear because the whole construction is effective relative to $\emptyset'$. \\

\emph{Claim 2: For all~$n$, $\mathbf{d}(\sigma_n) \leq \mathbf{d}(\lambda) \cdot \prod_{i=1}^n (1+ 1/i^2)$. Thus the sequence $\mathbf{d}(\sigma_n)$ is bounded, and thus $X$ is Martin-L\"of random }.\\

The inequality is proved by induction. It is obvious for $n=0$ and if we have it for some $n$, then by choice of $\tau$ and $\sigma_{n+1}$, we have 
\[
\mathbf{d}(\sigma_{n+1}) \leq \delta_{n+1} \mathbf{d}(\sigma_n) \leq (1+1/(n+1)^2) \mathbf{d}(\sigma_n)\leq (1+1/(n+1)^2) \cdot \mathbf{d}(\lambda) \cdot \prod_{i=1}^n (1+ 1/i^2) 
\] 
 which finishes the induction. Thus we have, for all~$n$, $\mathbf{d}(\sigma_n)$ is bounded by $\mathbf{d}(\lambda) \cdot \prod_{i=1}^\infty (1+1/i^2)$ which is a finite real. Since all $\sigma_n$ are initial segments of~ $X$, this means that $\liminf_k \mathbf{d}(X \upharpoonright k)$ is finite, hence $X$ is Martin-L\"of random. \\

\emph{Claim 3: $|\sigma_n|=n^2/2 + O(n \log n)$}.\\

Indeed we have by construction 
\[
|\sigma_{n+1}|=|\sigma_n|+l(\delta_{n+1},2^{n+1}) = |\sigma_n| + \left \lceil \log \left( \frac{2^{n+1}+1}{1-\frac{1}{1+1/(n+1)^2}} \right) \right\rceil = |\sigma_n| + n + O(\log n)
\]
and thus, by summation, $|\sigma_n| = \frac{n^2}{2} + O(n \log n)$. \\

\emph{Claim 4: for any computable time bound~$t$, $K^{\emptyset',t}(\sigma_n) \geq n$ while $K^{\emptyset'}(\sigma_n) = O(\log n)$}.\\

Let $t$ be a computable time bound. For almost all~$k$, $t(k) \leq F(k)$ hence for almost all~$\tau$, $K^{\emptyset',t}(\tau) \geq K^{\emptyset',F}(\tau)$. In particular, for almost all~$n$, $K^{\emptyset',t}(\sigma_n) \geq K^{\emptyset',F}(\sigma_n) \geq n$ (the last inequality is by choice of $\sigma_{n+1}$ in the construction). On the other hand, since the sequence of $\sigma_n$ is $\emptyset'$-computable, we have $K^{\emptyset'}(\sigma_n) =^+ K^{\emptyset'}(n) = O(\log n)$. \\

\emph{Claim 5: $X$ is $\emptyset'$-deep}.\\

Let $t$ be a computable time bound. For any~$k$, by construction of $X$, there is an $n$ such that 
\[
\sigma_n \preceq (X \upharpoonright k) \preceq \sigma_{n+1}
\]
We can recover $\sigma_n$ from $|\sigma_n|$ and $X \upharpoonright k$ by just truncating the latter to its first~$|\sigma_n|$ bits. Since $|\sigma_n|$ is computable in~$n$, this means that if~$t'$ is sufficiently fast-growing,
\[
K^{\emptyset',t} (X \upharpoonright k) \geq K^{\emptyset',t'} (\sigma_n) - K^{\emptyset',t}(n) \geq n - O(\log n)
\]
(the last inequality following from Claim 4). 

By the same reasoning, we can recover $X \upharpoonright k$ from $\sigma_{n+1}$ and $k$, hence 
\[
K^{\emptyset'}(X \upharpoonright k) \leq K^{\emptyset'}(\sigma_{n+1}) + K^{\emptyset'}(k) \leq O(\log n) + O(\log k) 
\]
Moreover, since $|\sigma_n| \leq k \leq |\sigma_{n+1}|$, by Claim 3, $k \sim n^2/2$, meaning that one can replace $O(\log k)$ by $O(\log n)$ in the above expression. Putting both inequalities together, we get 
\[
K^{\emptyset',t} (X \upharpoonright k)- K^{\emptyset'} (X \upharpoonright k) \geq n - O(\log n) \geq \sqrt{2k} - o(\sqrt{k})\]
which finally shows that 
\[
K^{\emptyset',t} (X \upharpoonright k) - K^{\emptyset'} (X \upharpoonright k) \rightarrow \infty
\]
This finishes the proof. 
\end{proof}

	\section{Depth relative to ML-random oracles} \label{MLR}
	The main goal of this section is to prove that depth is strictly implied by depth relative to any ML-random oracle. We will first prove that ML-random sets are non-trivial examples of oracles for which all deep sets remain deep relative to them. We successively show that shallowness is a notion preserved by almost every oracle, namely that if a set is shallow, then it is shallow relatively to a class of oracles of measure 1. Despite this fact, every ML-random set ``adds'' deep sets: for every ML-random oracle we can find a shallow set which is deep relatively to that oracle. Interestingly, as a consequence of the existence of such sets, we can give a quite short and easy proof of the fact, originally proved by Barmpalias, Lewis and Ng~\cite{BLN}, that every PA-complete degree is the join of two ML-random degrees.
	
	\subsection{Deep sets remain deep relative to ML-random oracles} 
	In order to prove that no deep set can be shallow relative to ML-random oracles, we recall another characterization of ML-randomness.
	
	\begin{Definition}
		
		$\Psi: \sequences \rightarrow [0, \infty]$ is an integral test if
		\begin{itemize}
			\item $\Psi$ is \emph{lower-semicomputable}, i.e.~it is the supremum of a computable sequence of computable functions $\Psi_n: \sequences \rightarrow [0, \infty)$, and 
			\item $\int_{X \in \sequences} \Psi(X)\, d\mu \leq 1 $.
		\end{itemize}
		
	\end{Definition}
	
	Integral tests characterize ML-randomness, in the following sense. (For a proof of the statement below, see \cite{BGKRS}).
	
	\begin{Proposition}
		$X$ is not ML-random if and only if there is an integral test~$\Psi$ such that $\Psi(X) = \infty$.
	\end{Proposition}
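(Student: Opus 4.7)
The plan is to prove the two implications by exploiting the standard duality between lower-semicomputable objects of finite mass and Martin-L\"of tests: the forward direction converts an integral test into an ML-test via Markov's inequality, while the reverse direction sums the indicator functions of a universal ML-test to build an integral test that diverges at every non-random point.

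For the ($\Leftarrow$) direction, suppose $\Psi$ is an integral test with $\Psi(X) = \infty$. Writing $\Psi = \sup_k \Psi_k$ for a uniformly computable sequence of computable approximations, the level sets $\mathcal{U}_n = \{Y \in \sequences : \Psi(Y) > 2^n\}$ are uniformly $\Sigma^0_1$, since $\Psi(Y) > 2^n$ iff $\Psi_k(Y) > 2^n$ for some~$k$, and the latter is a union of basic cylinders effectively in $k$ and~$n$. Markov's inequality together with $\int \Psi \, d\mu \leq 1$ gives $\mu(\mathcal{U}_n) \leq 2^{-n}$, so $(\mathcal{U}_n)$ is a Martin-L\"of test, and $X$ lies in every $\mathcal{U}_n$ because $\Psi(X) = \infty$. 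Hence $X$ is not ML-random.

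For the ($\Rightarrow$) direction, fix a universal Martin-L\"of test $(\mathcal{V}_n)_{n \geq 1}$ with $\mu(\mathcal{V}_n) \leq 2^{-n}$, and set $\Psi(Y) = \sum_{n \geq 1} \mathbf{1}_{\mathcal{V}_n}(Y)$. Each indicator $\mathbf{1}_{\mathcal{V}_n}$ is lower-semicomputable as the pointwise supremum of the computable indicators of the increasing finite unions of clopen cylinders that exhaust $\mathcal{V}_n$; the partial sums of the series are therefore lower-semicomputable, and a standard diagonal enumeration produces a uniformly computable double-indexed family of computable functions whose pointwise supremum is $\Psi$. Monotone convergence yields $\int \Psi \, d\mu = \sum_n \mu(\mathcal{V}_n) \leq 1$, and if $X$ is not ML-random then $X \in \bigcap_n \mathcal{V}_n$, so every term of the series equals $1$ at~$X$ and $\Psi(X) = \infty$.

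Essentially all the work is bookkeeping; the only point I would handle with genuine care is the uniform lower-semicomputability of $\Psi$ in the reverse direction, since one has to fold two nested approximations -- the finite unions approximating each $\mathcal{V}_n$ from below, and the partial sums of the infinite series -- into a single uniformly computable sequence of computable functions. Nothing deeper than an effective diagonal argument is needed to carry this out.
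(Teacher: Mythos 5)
Your proof is correct and is the standard duality argument between integral tests and Martin-L\"of tests (Markov's inequality for one direction, summing indicators of a universal test for the other); the paper itself does not prove this proposition but only cites a reference for it. The one point you flag---uniformizing the two nested approximations into a single computable family---is indeed the only bookkeeping issue, and your diagonal enumeration of the functions $\sum_{n\leq N}\mathbf{1}_{\mathcal{V}_{n,k}}$ handles it.
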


	We can now turn to the main result.
	\begin{Theorem} \label{MLRlowfordepth}
	Let $A \in \sequences$ be ML-random. Then every deep set is $A$-deep.
	\end{Theorem}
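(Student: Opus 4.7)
The plan is to prove the contrapositive: assuming $A$ is Martin-L\"of random and $X$ is $A$-shallow, show that $X$ is shallow. Unwinding the hypothesis via the semimeasure characterization, there is a computable time bound $t$ and a constant $\delta>0$ such that
\[
\mathbf{m}^{A,t}(X\upharpoonright n) \;\geq\; \delta\,\mathbf{m}^A(X\upharpoonright n) \quad \text{for infinitely many } n.
\]
Since $\mathbf{m}^A \geq^\times \mathbf{m}$ always holds (by relativized universality), this yields some $\delta'>0$ with $\mathbf{m}^{A,t}(X\upharpoonright n) \geq \delta'\,\mathbf{m}(X\upharpoonright n)$ for infinitely many $n$.

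The central object is the averaged time-bounded semimeasure
\[
\bar{\mathbf{m}}^t(\sigma) \;=\; \int_Y \mathbf{m}^{Y,t}(\sigma)\, d\mu(Y).
\]
Because $\mathbf{m}^{Y,t}(\sigma)$ depends only on $Y \upharpoonright t(|\sigma|)$, the integral is a finite sum computable from $\sigma$, so $\bar{\mathbf{m}}^t$ is a \emph{computable} semimeasure. By Lemma~\ref{lem:time-bounded} there is a computable time bound $t^*$ with $\bar{\mathbf{m}}^t \leq^\times \mathbf{m}^{t^*}$. Since $X$ is deep, $\mathbf{m}(X\upharpoonright n)/\mathbf{m}^{t^*}(X\upharpoonright n) \to \infty$, hence $\mathbf{m}(X\upharpoonright n)/\bar{\mathbf{m}}^t(X\upharpoonright n) \to \infty$, and combined with the hypothesis we get $\mathbf{m}^{A,t}(X\upharpoonright n)/\bar{\mathbf{m}}^t(X\upharpoonright n) \to \infty$ along some infinite set $I$ of indices.

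The second step is to build a lower-semicomputable integral test on $\sequences$ witnessing that this behaviour forces $A$ to be non-random. The natural candidate is
\[
\Psi(A) \;=\; \sum_\sigma w(\sigma)\,\frac{\mathbf{m}^{A,t}(\sigma)}{\bar{\mathbf{m}}^t(\sigma)}
\]
for a suitable lower-semicomputable weight $w$ with $\sum_\sigma w(\sigma) \leq 1$. Lower-semicomputability in $A$ is automatic ($\mathbf{m}^{A,t}/\bar{\mathbf{m}}^t$ is $A$-computable, $w$ is lower-semicomputable), and by Fubini $\int \Psi\,d\mu = \sum_\sigma w(\sigma) \leq 1$; so $\Psi$ is an integral test and $\Psi(A)<\infty$ for every ML-random $A$. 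If one can arrange that $\Psi(A)=\infty$ on $A$'s satisfying the $A$-shallowness condition for deep $X$, the contradiction with ML-randomness of $A$ is obtained.

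The main obstacle I foresee is precisely this divergence requirement, because the rate at which $\mathbf{m}(X\upharpoonright n)/\bar{\mathbf{m}}^t(X\upharpoonright n)\to\infty$ can be arbitrarily slow for a general deep $X$, while the weight $w$ must be universal and summable. One route is to take $w=\mathbf{m}$ and show that $\sum_{n\in I}\mathbf{m}(X\upharpoonright n)^2/\bar{\mathbf{m}}^t(X\upharpoonright n)=\infty$ follows from depth; another, probably cleaner route is to replace the single integral test by a countable family of Martin-L\"of tests indexed by pairs $(t,\delta)$, each designed to capture the class $\mathcal{S}_{t,\delta}=\{A:\mathbf{m}^{A,t}(X\upharpoonright n)\geq \delta\,\mathbf{m}(X\upharpoonright n)\text{ i.o.}\}$ and shown to be ML-null by a direct Markov-inequality and Borel--Cantelli argument exploiting $\bar{\mathbf{m}}^t(X\upharpoonright n)/\mathbf{m}(X\upharpoonright n)\to 0$. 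The set $\{A:X\text{ is $A$-shallow}\}$ is then covered by the countable union of these ML-null classes, so no ML-random $A$ lies in it; equivalently, $X$ is $A$-deep for every such $A$.
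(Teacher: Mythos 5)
You have assembled the right objects---the averaged semimeasure $\bar{\mathbf{m}}^t(\sigma)=\int_Y \mathbf{m}^{Y,t}(\sigma)\,d\mu(Y)$, its computability, the comparison $\bar{\mathbf{m}}^t\leq^\times \mathbf{m}^{t^*}$ via Lemma~\ref{lem:time-bounded}, and the integral test $\Psi(A)=\sum_\sigma \mathbf{m}(\sigma)\,\mathbf{m}^{A,t}(\sigma)/\bar{\mathbf{m}}^t(\sigma)$ (your weight $w=\mathbf{m}$ does make $\int\Psi\,d\mu\leq 1$ by Fubini)---but the completion you are reaching for goes the wrong way, and the obstacle you flag is fatal to both of your proposed routes. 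Route one: depth only gives $\mathbf{m}(X\upharpoonright n)/\bar{\mathbf{m}}^t(X\upharpoonright n)\to\infty$, and since $\sum_n \mathbf{m}(X\upharpoonright n)\leq 1$, the series $\sum_{n\in I}\mathbf{m}(X\upharpoonright n)^2/\bar{\mathbf{m}}^t(X\upharpoonright n)$ can easily converge (e.g.\ ratio growing like $n$ while $\mathbf{m}(X\upharpoonright n)$ decays like $n^{-3}$), so $\Psi(A)=\infty$ is simply not forced. Route two: the classes $\mathcal{S}_{t,\delta}$ are built from the prefixes of $X$, so any test covering them is at best effective \emph{relative to $X$}, and the ``infinitely often'' structure plus the non-summability of $\bar{\mathbf{m}}^t(X\upharpoonright n)/\mathbf{m}(X\upharpoonright n)$ pushes you to $\Sigma^0_2(X)$ covers; this is exactly the situation in the paper's measure-one theorem, which for that reason only concludes something for oracles $2$-random relative to $X$, strictly weaker than what is claimed here.

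The missing idea is to use the \emph{finiteness} of $\Psi(A)$ on ML-random $A$, not its divergence, and to keep the sum over \emph{all} strings rather than over prefixes of $X$. If $A$ is ML-random then $\Psi(A)<k$ for some $k$, which says precisely that $\sigma\mapsto \mathbf{m}(\sigma)\,\mathbf{m}^{A,t}(\sigma)/(k\,\bar{\mathbf{m}}^t(\sigma))$ is an $A$-lower-semicomputable discrete semimeasure; by universality of $\mathbf{m}^A$ among such semimeasures it is $\leq^\times \mathbf{m}^A$, and taking logarithms (together with $\bar{\mathbf{m}}^t\leq^\times\mathbf{m}^{t^*}$) yields the uniform pointwise inequality
\[
K^{t^*}(\sigma)-K(\sigma)\;\leq^+\;K^{A,t}(\sigma)-K^A(\sigma)\qquad\text{for all }\sigma.
\]
Applied to $\sigma=X\upharpoonright n$ for a deep $X$, the left side tends to infinity, hence so does the right side, and $X$ is $A$-deep. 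This is the paper's argument; it sidesteps both the rate-of-divergence issue and the dependence on $X$, because the inequality is established once and for all strings before $X$ ever enters the picture.
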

	
	\begin{proof}
		We prove, in particular, that, if $A \in \sequences$ is ML-random, then for every computable time-bound $t$ there exists a computable time-bound $t'$ such that 
		$$\left(\forall \sigma \right) \left[ K^{t'}(\sigma) - K(\sigma) \le^+ K^{A,t}(\sigma) - K^A(\sigma) \right]$$
		
which will immediately imply the theorem. \\
		
		For any string $\sigma$ and time-bound $t$, let $\mathbf{m}(\sigma) = 2^{-K(\sigma)}$, $\mathbf{m}^t(\sigma) = 2^{-K^t(\sigma)}$, $\mathbf{m}^A(\sigma) = 2^{-K^A(\sigma)}$ and $\mathbf{m}^{A,t}(\sigma) = 2^{-K^{A,t}(\sigma)}$.
		
		For every $A \in \sequences$ and every computable time-bound $t$, it clearly holds that 
		$$\sum_{\sigma \in \strings} \mathbf{m}^{A,t}(\sigma) \leq \sum_{\sigma \in \strings} \mathbf{m}^A(\sigma) \leq 1.$$
		Hence, 
		$$\int_{A \in \sequences} \sum_{\sigma \in \strings} \mathbf{m}^{A,t}(\sigma) \,d\mu \leq 1$$
		and, by interchanging the sum with the integral, 
		$$\sum_{\sigma \in \strings} \left( \int_{A\in \sequences} \mathbf{m}^{A,t}(\sigma) \,d\mu \right) \leq 1,$$
		which means that the map $\sigma \mapsto \int_{A \in \sequences} \mathbf{m}^{A,t} \,d\mu$ is a discrete semimeasure. Moreover, it is computable. Thus, by Lemma~\ref{lem:time-bounded},
		$$\int_{A\in \sequences} \mathbf{m}^{A,t}(\sigma) \,d\mu \leq c \cdot \mathbf{m}^{t'}(\sigma),$$
		for some computable time-bound $t'$.

		Now, consider the map $\Psi: \sequences \rightarrow [0, \infty]$ given by 
		$$\Psi(A)= \sum_{\sigma \in \strings} \frac{\mathbf{m}^{A,t}(\sigma) \mathbf{m}(\sigma)}{c \cdot \mathbf{m}^{t'}(\sigma)}.$$
		Then $\Psi$ is lower-semicomputable and, by the above computations, $$\int_{A \in \sequences} \Psi(A) \,d\mu \leq 1,$$
		that is, $\Psi$ is an integral test. Let $A$ be a ML-random set, so that $\Psi(A) < k$ for some $k \in \mathbb{N}$. By definition of~$\Psi$ this means that 
		\[
		\sum_{\sigma \in \strings} \frac{\mathbf{m}^{A,t}(\sigma) \mathbf{m}(\sigma)}{c \cdot \mathbf{m}^{t'}(\sigma)} < k
		\]
		and thus 
		$\sigma \mapsto \frac{\mathbf{m}^{A,t}(\sigma) \mathbf{m}(\sigma)}{kc \cdot \mathbf{m}^{t'}(\sigma)}$ is an $A$-lss. It follows that for every $\sigma \in \strings$, 
		$$\frac{\mathbf{m}^{A,t}(\sigma) \mathbf{m}(\sigma)}{\mathbf{m}^{t'}(\sigma)} \leq^{\times} \mathbf{m}^A(\sigma),$$
		which, by taking the logarithm of both sides, implies 
		$$K^{t'}(\sigma) - K(\sigma) \leq^+ K^{A,t}(\sigma) - K^{A}(\sigma),$$	
		
		which is what we wanted.	
	\end{proof}

	As a consequence, we get our first example of an oracle making the class of deep sets strictly smaller.

	\begin{Corollary}\label{omega}
		The class of $\Omega$-deep strictly contains the class of deep sets.
	\end{Corollary}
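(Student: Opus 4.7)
The plan is to handle the two inclusions separately. The inclusion ``$\text{deep} \subseteq \Omega\text{-deep}$'' is immediate: Chaitin's theorem says that $\Omega$ is ML-random, so Theorem~\ref{MLRlowfordepth} applied with $A=\Omega$ gives this direction at once.

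For the strict direction I would exhibit one shallow set that is $\Omega$-deep. The natural candidate is the set $X$ produced in Theorem~\ref{XMLRandHPdeep}, which is simultaneously ML-random and $\hp$-deep. Being ML-random, $X$ is automatically shallow (one of the four basic properties of unrelativized depth recalled in Section~2.4). To transfer the $\hp$-depth of $X$ to $\Omega$-depth, I would invoke Theorem~\ref{preservingdepth} with $B=\hp$ and $A=\Omega$, whose hypotheses are $\Omega \equiv_T \hp$ (classical) and $\Omega \leq_{tt} \hp$.

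The only point that is not directly on the shelf is $\Omega \leq_{tt} \hp$, and this is where I expect the main obstacle to sit, since the paper's definition of $tt$-reducibility requires totality within a computable time bound on \emph{every} oracle (not only on~$\hp$). I would verify it by fixing a computable nondecreasing rational approximation $(\Omega_s)$ converging to $\Omega$ from below and observing that, for each $k$ with $0 \leq k < 2^{n+1}$, the question ``is there a stage~$s$ with $\Omega_s > k/2^{n+1}$?'' is $\Sigma^0_1$ with an index that is computable uniformly in~$(n,k)$. The oracle machine which, on input $n$, queries its oracle at these $2^{n+1}$ prescribed positions and from the answers returns the corresponding $n$-bit prefix of $\Omega$ halts in time $O(2^n)$ on every oracle and returns $\Omega \upharpoonright n$ when the oracle is $\hp$. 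This is exactly a truth-table reduction in the paper's sense, so Theorem~\ref{preservingdepth} applies and yields that $X$ is $\Omega$-deep, while its shallowness has already been recorded; hence the inclusion of Theorem~\ref{MLRlowfordepth} is strict.
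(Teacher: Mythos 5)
Your argument is exactly the paper's proof: Theorem~\ref{MLRlowfordepth} gives the inclusion since $\Omega$ is ML-random, and the set $X$ of Theorem~\ref{XMLRandHPdeep} (ML-random, hence shallow) is transferred from $\hp$-deep to $\Omega$-deep via Theorem~\ref{preservingdepth} using $\Omega \leq_{tt} \hp$ and $\Omega \equiv_T \hp$. The only difference is that you spell out the standard verification of $\Omega \leq_{tt} \hp$ (which the paper simply recalls), and your verification is correct, including the point that the reduction must halt within a computable time bound on every oracle.
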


	\begin{proof}
		Since $\Omega$ is ML-random, every deep set is $\Omega$-deep. It is then enough to find a set $X$ which is shallow but $\Omega$-deep. Let $X$ be the set built in Theorem~\ref{XMLRandHPdeep}. Recall that $\Omega \leq_{tt} \hp$ and $\Omega \equiv_T \hp$, hence, by Theorem \ref{preservingdepth}, $X$ is $\Omega$-deep.
	\end{proof}

	\subsection{A shallow sets which is deep relative to a ML-random oracle}
	This section is mainly devoted to showing that what we observe for $\Omega$ in Corollary \ref{omega} holds, in fact, for every ML-random oracle. To show this fact, we need to recall some terminology. 
	
	\begin{Definition}
		$f: \mathbb{N} \rightarrow \mathbb{N}$ is a \emph{Solovay function} if 
		$$(\forall n)[f(n) \le^+ K(n)] \quad \text{and} \quad \left(\io n \right)[K(n) \le^+ f(n)] .$$
	\end{Definition}
	As shown in \cite{HKM}, for any superlinear time-bound $t$, the time-bounded Kolmogorov complexity $K^{t}$ is a computable Solovay function. 
	
	We first observe that any shallow set remains shallow relative to almost every oracle, where ``almost every'' is meant in the measure-theoretic sense. Namely, we have the following result. 
	
	\begin{Theorem}
		If $X \in \sequences$ is shallow, then $$\mu\left( \left\lbrace A: \ X \ \text{is} \ A\text{-shallow} \right\rbrace \right)=1.$$
		
		In particular, $X$ is $A$-shallow for every $A$ which is $2$-random relative to $X$ (i.e.~$A \notin \cap_n \mathcal{U}_n^X$, where $(\mathcal{U}_n^X)_{n \in \mathbb{N}}$ is any sequence of uniformly $\Sigma_2^X$ classes, with $\mu(\mathcal{U}_n^X) \le 2^{-n}$ for all $n$).
	\end{Theorem}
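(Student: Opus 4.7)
My plan is to reduce the statement to a Fubini-plus-Fatou argument via two preparatory observations, and then to effectivize the same argument at the $\Sigma^X_2$ level for the ``in particular'' claim. Since $X$ is shallow, fix a computable time bound $t_0$ and a constant $c_0$ such that $S = \{n : \mathbf{m}(X\upharpoonright n) \leq c_0\,\mathbf{m}^{t_0}(X\upharpoonright n)\}$ is infinite; since $\mathbf{m}$ is lower-semicomputable and $\mathbf{m}^{t_0}$ is computable, $S$ is $\Pi^0_1(X)$ and in particular computable from $X'$. I will rely on two facts. First, since $\mathbf{m}^{t_0}$ is a computable semimeasure, applying Lemma~\ref{lem:relativized-time-bounded} to the functional that ignores its oracle and outputs $\mathbf{m}^{t_0}$ yields a computable time bound $t_1$ and a constant $c_1$ with $\mathbf{m}^{t_0}(\sigma) \leq c_1\,\mathbf{m}^{A,t_1}(\sigma)$ uniformly in $A$ and $\sigma$. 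Second, the map $\sigma \mapsto \int \mathbf{m}^A(\sigma)\,d\mu(A)$ is a lower-semicomputable discrete semimeasure: it is the monotone limit of the uniformly computable quantities $\int \mathbf{m}^A_s(\sigma)\,d\mu$, and summation over $\sigma$ gives at most $1$ by Tonelli. Hence by universality of $\mathbf{m}$ there is a constant $c_2$ with $\int \mathbf{m}^A(\sigma)\,d\mu \leq c_2\,\mathbf{m}(\sigma)$ for all $\sigma$.

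Setting $g_n(A) = \mathbf{m}^A(X\upharpoonright n)/\mathbf{m}^{t_0}(X\upharpoonright n)$, which is lsc in $A$ uniformly in $(n,X)$, these two facts give $\int g_n\,d\mu \leq c_0 c_2$ for every $n \in S$. For the measure-one part I then apply Fatou along the subsequence $S$:
\[
\int \liminf_{n \in S} g_n(A)\,d\mu \;\leq\; \liminf_{n \in S}\int g_n\,d\mu \;\leq\; c_0 c_2 \;<\; \infty,
\]
so $\liminf_{n \in S} g_n(A) < \infty$ for $\mu$-almost every $A$. On this full-measure set, the first observation above converts ``$g_n(A) \leq M$ for infinitely many $n \in S$'' into $\mathbf{m}^A(X\upharpoonright n) \leq M c_1\,\mathbf{m}^{A,t_1}(X\upharpoonright n)$ for infinitely many $n$, which is exactly $A$-shallowness of $X$ witnessed by $t_1$.

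For the ``in particular'' claim my plan is to package the Fatou bound into a uniformly $\Sigma^X_2$ Martin-L\"of test covering $\{A : X \text{ is } A\text{-deep}\}$. Enumerating $S$ as $n_1 < n_2 < \cdots$ computably from $X'$, and using that $\mathbf{m}^{t_0}(X\upharpoonright n_k)$ is $X'$-computable while $\mathbf{m}^A$ is uniformly lsc in $A$, each event $\{A : g_{n_k}(A) > \lambda\}$ is uniformly $\Sigma^{0,X'}_1 = \Sigma^{0,X}_2$ in $(k,\lambda)$ with $\mu$-measure at most $c_0 c_2/\lambda$ by Markov. Combining these across suitably growing thresholds is meant to yield a Solovay-summable $\Sigma^X_2$ family whose passage by $A$ forces $\liminf_k g_{n_k}(A) < \infty$, and hence (via the first observation) $A$-shallowness of $X$. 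Any $A$ that is 2-random relative to $X$ passes every such test, giving the claim.

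The hard part of the plan is this final packaging step: the condition ``$\liminf_k g_{n_k}(A) = \infty$'' is \emph{a priori} of complexity $\Pi^0_2$ in $X'$, one quantifier above what a $\Sigma^X_2$ test can witness. The thresholds in the test therefore have to be chosen so that failing the test already produces a uniform constant $M$ with $g_{n_k}(A) \leq M$ for infinitely many $k$ (rather than merely a slowly growing bound like $g_{n_k}(A) \leq 2^k$, which a naive Solovay test would deliver), and this is where the uniform lsc structure of the $g_{n_k}$ together with the $X'$-enumerability of $S$ has to be exploited jointly rather than via a purely level-by-level Markov estimate.
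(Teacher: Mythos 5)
The measure-one half of your argument is correct and is essentially the paper's own proof: both rest on the observation that $\sigma\mapsto\int\mathbf{m}^A(\sigma)\,d\mu$ is a lower-semicomputable discrete semimeasure, hence $\leq^{\times}\mathbf{m}$, combined with a Markov/Fatou estimate along the infinitely many lengths witnessing shallowness. Your use of Fatou directly on the functions $g_n$ is a clean repackaging of the paper's Markov-plus-Fatou argument on the classes $\mathcal{U}^k_n=\{Y:\mathbf{m}^Y(X\upharpoonright n)\geq k\,\mathbf{m}^t(X\upharpoonright n)\}$, and your bridge from ``$\liminf_{n\in S}g_n(A)<\infty$'' to $A$-shallowness via $\mathbf{m}^{t_0}\leq^{\times}\mathbf{m}^{A,t_1}$ is exactly right (if anything, your direct appeal to the shallowness hypothesis to get the infinite set $S$ is cleaner than the paper's detour through Solovay functions).

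The ``in particular'' half, however, has a genuine gap, and you have correctly located it yourself. The set $\{A:\ X\text{ is }A\text{-deep}\}$ is contained in $\bigcap_k\mathcal{L}_k$, where $\mathcal{L}_k$ is the $\liminf$ along $S$ of the uniformly $\Sigma^{0,X}_1$ classes $\{A:g_n(A)\geq k\}$; written out, such a $\liminf$ sits one quantifier above $\Sigma^{0,X}_2$, and a level-by-level Markov bound on the individual classes $\{A:g_{n_k}(A)>\lambda\}$ does not by itself yield a $\Sigma^X_2$ test whose avoidance forces a \emph{uniform} finite bound on infinitely many of the $g_{n_k}(A)$. Your text stops at describing what such a packaging ``is meant to'' achieve. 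The missing ingredient, which is what the paper's proof invokes as a citation, is a covering lemma: the $\liminf$ of a uniformly given sequence of $\Sigma^{0,X}_1$ classes, each of measure at most $\epsilon$, is contained in a single $\Sigma^{0,X}_2$ class of measure $O(\epsilon)$. Granting that lemma, your Markov bound $\mu(\mathcal{L}_k)\leq^{\times}1/k$ turns the $\mathcal{L}_k$ into a test for $2$-randomness relative to $X$, so any $A$ that is $2$-random relative to $X$ avoids some $\mathcal{L}_k$, i.e.\ satisfies $g_n(A)<k$ for infinitely many $n\in S$, which gives $A$-shallowness of $X$. Without this covering lemma (or an equivalent quantifier-reduction argument, e.g.\ an $X'$-effective choice of finite subintersections approximating each tail intersection in measure), the second claim of the theorem is not established by your proposal.
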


	\begin{proof}
		Since $X$ is shallow, there are a computable time-bound $t$ and $c \in \mathbb{N}$ such that
		$$\left(\io n \right)\left[\frac{\mathbf{m}(X \upharpoonright n)}{\mathbf{m}^t(X \upharpoonright n)} < c \right].$$
		
		Recall that the map 
		$$\sigma \mapsto \int_{A\in \sequences} \mathbf{m}^A(\sigma)\, d\mu$$
		is a lower-semicomputable discrete semimeasure. Hence, by the universality of $\mathbf{m}$, we get that $\mathbf{m}(\sigma) \ge^{\times} \int_{A \in \sequences} \mathbf{m}^A(\sigma) \, d\mu$ for all $\sigma \in \strings$. On the other hand, there exists a constant~$c'$ such that $c' \mathbf{m}^A > \mathbf{m}$ for any~$A$, hence $\int_{\sequences} \mathbf{m}^A(\sigma) \ge^{\times} \mathbf{m}(\sigma)$. Putting the two together, 
		\begin{equation} \label{average}
		 \mathbf{m}(\sigma) =^{\times} \int_{A \in \sequences} \mathbf{m}^A(\sigma) \,d\mu.
		\end{equation}
		
		For any $k \in \mathbb{N}$ let
		$$\mathcal{L}_k = \left\lbrace Y : \ \left(\foralmostall n \right)\left[\frac{\mathbf{m}^Y(X \upharpoonright n)}{\mathbf{m}^t (X \upharpoonright n)} \ge k \right] \right\rbrace.$$
		Now, suppose that $X$ is $A$-deep, so that, in particular, 
		$$\left(\forall k \foralmostall n \right)\left[\frac{\mathbf{m}^A(X \upharpoonright n)}{\mathbf{m}^t(X \upharpoonright n)} \ge k \right],$$
		meaning that $A \in \mathcal{L}_k$ for every $k$. It is then enough to show that 
		$$\mu\left(\bigcap_{k \in \mathbb{N}} \mathcal{L}_k \right)=0,$$
		namely that
		$$\lim\limits_{k \rightarrow \infty}\mu(\mathcal{L}_k) = 0.$$
		Observe that 
		$$\mathcal{L}_k = \liminf_{n \rightarrow \infty} \mathcal{U}_n^k,$$
		where
		$$\mathcal{U}_n^k =  \left\lbrace Y : \ \left[\frac{\mathbf{m}^Y(X \upharpoonright n)}{\mathbf{m}^t (X \upharpoonright n)} \ge k \right] \right\rbrace.$$ 
		We claim that 
		$$\left(\io n \right)\left[ \mu\left(\mathcal{U}_n^k\right) \le^{\times} \frac{1}{k} \right].$$
		Indeed, since $K^t$ is a Solovay function, 
		$$\left( \io n \right)\left[\mathbf{m}^t (X \upharpoonright n) \ge^{\times} \mathbf{m}(X \upharpoonright n) \right].$$
		Then, for any such $n$, if $Y \in \mathcal{U}_n^k$, then
		$$\mathbf{m}^Y(X \upharpoonright n) \ge^{\times} k\cdot \mathbf{m}^t(X \upharpoonright n) \ge^{\times} k \cdot \mathbf{m}(X \upharpoonright n) \ge^{\times} k\cdot \int_{A \in \sequences} \mathbf{m}^A(X \upharpoonright n) \,d\mu,$$
		where the last inequality follows from (\ref{average}). Our claim follows then by Marokov's inequality. Therefore, we get 
		$$\mu(\mathcal{L}_k) = \mu (\liminf_{n \rightarrow \infty} \mathcal{U}_n^k) \leq \liminf_{n \rightarrow \infty} \mu (\mathcal{U}_n^k) \leq^{\times} \frac{1}{k},$$
		where the first inequality follows by Fatou's Lemma, and hence 
		$$\lim_{k \rightarrow \infty} \mu(\mathcal{L}_k) \le^{\times} \lim_{k \rightarrow \infty} \frac{1}{k}=0.$$
		
		Finally, observe that $\mathcal{L}_k$ is the limit inferior of a uniformly c.e.~sequence of $\Sigma_1^{0,X}$ classes with bounded measure. Hence, as proven in \cite{B}, each $\mathcal{L}_k$ is contained in a $\Sigma_2^{0, X}$ class of bounded measure. Hence, the classes $\mathcal{L}_k$ form a test for $2$-randomness relative to $X$, so that $X$ must be $A$-shallow for any $A$ which is $2$-random relative to $X$.
	\end{proof}
 	
 	It is then natural to ask whether for a ``very random'' oracle every set must be shallow. In other words, whether there is a randomness notion sufficiently strong to make any such oracle totally useless in organizing information. We answer this question in the negative. Indeed, we show that every ML-random oracle makes some shallow set deep. Intuitively, the proof of this fact is similar to the one-time pad protocol in cryptography: we can ``mix'' together some important piece of information $x$ with some random string $a$ we know, so that the output $x \triangle a$ still looks important for us (as we can distinguish the added random noise $a$), while looking random to the others. 
 	
 	In order to prove formally our claim, we recall some well-known notions and facts. 
 	
 	\begin{Definition} Let $(\phi_e)$ be an effective enumeration of all partial computable functions from $\mathbb{N}$ to $\mathbb{N}$. A total function $f$ is \emph{diagonally non-computable} (DNC) if $f(e) \not=\phi_e(e)$ whenever $\phi_e(e)$ is defined. We say that $f$ is $\text{DNC}_2$ if it is DNC and its range is $\{0,1\}$. The set $\text{DNC}_2$ of such functions is a $\Pi^0_1$ class.  A set $X$ is said to be\emph{PA-complete} if it computes some member of $\text{DNC}_2$. 
	
 	\end{Definition}
	
(The terminology `PA-complete' comes from the equivalent definition where one replaces the class $\text{DNC}_2$ by the class of complete coherent extensions of Peano Arithmetic). 
 
 A well-known property of the class $\text{DNC}_2$ is that it is \emph{universal} (a.k.a.\ \emph{Medvedev-complete}) among $\Pi^0_1$ classes, that is, for every non-empty $\Pi^0_1$ class~$\mathcal{C}$, there exists a total functional $\Phi$ such that $\Phi^X \in \mathcal{C}$ for any $X \in \text{DNC}_2$. Combined with a result of Moser and Stephan, this yields the following lemma. 
 
 \begin{Lemma}\label{lem:pa-deep}
 Every $X \in \text{DNC}_2$ is deep. 
 \end{Lemma}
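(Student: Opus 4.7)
The plan is to combine the Medvedev-completeness of $\text{DNC}_2$ with a result of Moser and Stephan asserting the existence of a nonempty $\Pi^0_1$ class consisting entirely of deep sets. Granted such a class, the proof is a one-line application of the Slow Growth Law.

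More precisely, I would first invoke the Moser--Stephan result to fix a nonempty $\Pi^0_1$ class $\mathcal{C} \subseteq \sequences$ all of whose elements are deep. Then, using the fact stated just before the lemma that $\text{DNC}_2$ is universal among nonempty $\Pi^0_1$ classes, I obtain a total Turing functional $\Phi$ such that $\Phi^X \in \mathcal{C}$ for every $X \in \text{DNC}_2$. Since $\Phi$ is total on all oracles, the reduction $\Phi^X \leq_{tt} X$ holds (by the equivalent definition of $tt$-reducibility via total functionals recalled in the Preliminaries). In particular, for any $X \in \text{DNC}_2$, the set $\Phi^X$ is deep and $tt$-below $X$.

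Applying the (unrelativized) Slow Growth Law from the Proposition on basic properties of depth -- namely that depth is preserved upwards under $\leq_{tt}$ -- to the reduction $\Phi^X \leq_{tt} X$ immediately yields that $X$ is deep, which is the desired conclusion.

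The only nontrivial ingredient is the existence of the $\Pi^0_1$ class $\mathcal{C}$ of deep sets; everything else is completely formal. I therefore expect this step to be where the substance of the argument (quoted as a black box from Moser--Stephan) lies, while the bulk of the actual proof on the page is just the two-line combination of universality of $\text{DNC}_2$ with the Slow Growth Law.
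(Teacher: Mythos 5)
Your proof is correct and coincides with the paper's own argument: both invoke the Moser--Stephan $\Pi^0_1$ class of deep sets, use Medvedev-completeness of $\text{DNC}_2$ to obtain a total functional witnessing $\Phi^X \in \mathcal{C}$ (hence $\Phi^X \leq_{tt} X$), and conclude by the Slow Growth Law.
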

 
 \begin{proof}
 Indeed, in~\cite{MS}, it is shown that there exists a non-empty $\Pi_1^0$ class $\mathcal{C}$ in which every member is a deep set. Since $\text{DNC}_2$ is universal, every $X \in \text{DNC}_2$ tt-computes some member of $\mathcal{C}$ (via the same functional). Since depth is tt-closed upwards, this proves our result. 
 \end{proof}

	Thus we can use well-known basis theorems to obtain deep sets with some desired properties. In particular, we will use the following fact (for a proof, see~\cite{DHMN}). 
 	
 	\begin{Proposition}[Randomness Basis Theorem] \label{RBT} Let $R$ be a ML-random set. Every non-empty $\Pi_1^0$ class contains an element $X$ such that $R$ is $X$-ML-random.
 	\end{Proposition}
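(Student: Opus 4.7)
The plan is to convert $\mathcal{C}$ and the family of $Y$-ML-tests into a single unrelativized ML-test that $R$ must escape, and to interpret that escape as yielding the required $X$. Fix a uniform universal family $(\mathcal{U}_n^Y)_{n \in \N}$ of $Y$-ML-tests (so the classes $\mathcal{U}_n^Y$ are uniformly $\Sigma_1^{0,Y}$ with $\mu(\mathcal{U}_n^Y) \leq 2^{-n}$, and $R$ is $Y$-ML-random iff $R \notin \bigcap_n \mathcal{U}_n^Y$), and define the ``averaged'' test
\[
	\mathcal{V}_n \;:=\; \bigcap_{Y \in \mathcal{C}} \mathcal{U}_n^Y \;=\; \left\{R \in \sequences : \forall Y \in \mathcal{C},\ R \in \mathcal{U}_n^Y \right\}.
\]
The idea is that any $R$ escaping $\mathcal{V}_n$ for some $n$ automatically produces a candidate $X \in \mathcal{C}$ for which $R$ passes the $n$-th level of the $X$-test, hence is $X$-ML-random.

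The first step is to check that $(\mathcal{V}_n)_{n \in \N}$ is a genuine (unrelativized) ML-test. The measure bound $\mu(\mathcal{V}_n) \leq 2^{-n}$ is immediate by fixing any $Y_0 \in \mathcal{C}$ (which exists by non-emptiness) and using $\mathcal{V}_n \subseteq \mathcal{U}_n^{Y_0}$. The main obstacle of the proof is establishing that $\mathcal{V}_n$ is uniformly $\Sigma_1^0$; this is where effective compactness of $\sequences$ enters. The set of pairs $\{(Y,R) : Y \in \mathcal{C} \text{ and } R \notin \mathcal{U}_n^Y\}$ is $\Pi_1^0$ in the product space (as the intersection of $\mathcal{C} \times \sequences$ with the $\Pi_1^0$ condition $R \notin \mathcal{U}_n^Y$), so projecting out the $Y$-coordinate yields a uniformly $\Pi_1^0$ set whose complement is $\mathcal{V}_n$. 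Concretely, writing $\mathcal{C}$ as the set of infinite branches of a computable tree $T$, I would enumerate $\mathcal{V}_n$ by listing those $\rho \in \strings$ such that, for some finite stage $s$, every node $\sigma \in T$ of length $s$ has already witnessed $[\rho] \subseteq \mathcal{U}_n^\sigma$ using only bits of $\sigma$. By K\"onig's lemma, such an $s$ exists exactly when $[\rho] \subseteq \mathcal{V}_n$, and the search is c.e.

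Once $(\mathcal{V}_n)$ is known to be an ML-test, the conclusion follows effortlessly: since $R$ is ML-random, there is some $n$ with $R \notin \mathcal{V}_n$, which by the very definition of $\mathcal{V}_n$ provides some $X \in \mathcal{C}$ with $R \notin \mathcal{U}_n^X$; since $\bigcap_m \mathcal{U}_m^X \subseteq \mathcal{U}_n^X$, we obtain $R \notin \bigcap_m \mathcal{U}_m^X$, and therefore $R$ is $X$-ML-random, as required. The overall strategy converts a hypothetical ``for every $X \in \mathcal{C}$, $R$ is not $X$-random'' into a single plain ML-test that $R$ must escape; the only genuinely nontrivial step is the effective compactness argument ensuring that the intersection of the uniform family $(\mathcal{U}_n^Y)_{Y \in \mathcal{C}}$ stays $\Sigma_1^0$.
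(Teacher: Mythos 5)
Your proof is correct: the averaged test $\mathcal{V}_n = \bigcap_{Y \in \mathcal{C}} \mathcal{U}_n^Y$ is indeed a uniformly $\Sigma^0_1$ ML-test (the effective-compactness/projection argument is the only real content, and your K\"onig's-lemma enumeration is the standard way to carry it out), and since the oracle test is universal, $R \notin \mathcal{U}_n^X$ for a single $n$ already yields $X$-ML-randomness of $R$. The paper does not prove this proposition but only cites \cite{DHMN}, and your argument is essentially the standard proof given there.
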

 
 	We are now ready to prove the main result of this section.
	\begin{Theorem} \label{MLRnotlowfors}
 		If a set $A$ is ML-random, then there exists a shallow set~$X$ which is $A$-deep.
 	\end{Theorem}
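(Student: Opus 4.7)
The plan is to follow the one-time pad intuition described just above the statement. Since $A$ is ML-random and $\text{DNC}_2$ is a non-empty $\Pi^0_1$ class, the Randomness Basis Theorem (Proposition \ref{RBT}) supplies some $Y \in \text{DNC}_2$ such that $A$ remains ML-random relative to $Y$. By Lemma \ref{lem:pa-deep}, $Y$ is deep, and since $A$ is ML-random, Theorem \ref{MLRlowfordepth} upgrades this to $Y$ being $A$-deep. I would then define $X = Y \triangle A$ (bitwise symmetric difference of the characteristic sequences) and argue that $X$ is both shallow and $A$-deep.

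For shallowness, I would show that $X$ is in fact ML-random, from which shallowness follows by the basic property that ML-random sets are shallow. Given any ML-test $(\mathcal{U}_n)$, consider the sets $\mathcal{V}_n = \{Z \in \sequences : Y \triangle Z \in \mathcal{U}_n\}$. Since $Z \mapsto Y \triangle Z$ is a $Y$-computable, measure-preserving involution of Cantor space, $(\mathcal{V}_n)$ is a $Y$-ML-test: each $\mathcal{V}_n$ is $\Sigma^0_1$ in $Y$ (uniformly in $n$) and $\mu(\mathcal{V}_n) = \mu(\mathcal{U}_n) \leq 2^{-n}$. As $A$ is $Y$-ML-random, $A \notin \bigcap_n \mathcal{V}_n$, i.e., $X = Y \triangle A \notin \bigcap_n \mathcal{U}_n$.

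For $A$-depth, the key observation is that, from the standpoint of the oracle $A$, the involution $Z \mapsto Z \triangle A$ is a trivial linear-time bitwise operation, so $X$ and $Y$ carry the same $A$-information at essentially the same time cost. More concretely, from an $A$-code for $X \upharpoonright n$ that decodes in time $t(n)$, XORing the decoded output bit-by-bit with $A \upharpoonright n$ yields an $A$-code for $Y \upharpoonright n$ of the same length up to $O(1)$, decoding in time $t(n) + O(n)$; symmetrically for unbounded $A$-codes and for the reverse direction. Thus for every computable time bound $t$ there is a computable $t'$ with
\[
K^{A,t}(X \upharpoonright n) - K^A(X \upharpoonright n) \;\geq^+\; K^{A,t'}(Y \upharpoonright n) - K^A(Y \upharpoonright n),
\]
and the right-hand side tends to $+\infty$ by $A$-depth of $Y$.

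The only mildly delicate ingredient is making sure that the XOR mask $A$ simultaneously destroys plain ML-nonrandomness and preserves $A$-depth; but once one views $A$ as both the oracle and the cryptographic key, this is the content of the measure-preserving pullback of ML-tests in one direction, and of the trivial relative tt-equivalence of $X$ and $Y$ (via $A$) in the other. All other components (the existence of $Y \in \text{DNC}_2$ that keeps $A$ random, and the fact that deep sets stay $A$-deep above ML-random $A$) are imported wholesale from Proposition \ref{RBT}, Lemma \ref{lem:pa-deep}, and Theorem \ref{MLRlowfordepth}.
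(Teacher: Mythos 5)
Your proposal is correct and follows essentially the same route as the paper: apply the Randomness Basis Theorem to a $\Pi^0_1$ class of deep sets to get a deep $Y$ keeping $A$ random, upgrade to $A$-depth via Theorem~\ref{MLRlowfordepth}, and take the symmetric difference with $A$. The only cosmetic differences are that you verify ML-randomness of $Y \triangle A$ by pulling back ML-tests through the $Y$-computable involution rather than via the complexity characterization relative to $Y$, and you check $A$-depth by direct code manipulation instead of invoking the relativized Slow Growth Law with $Y \leq_{tt} (Y \triangle A) \oplus A$; both are sound and equivalent in substance.
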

 
 	\begin{proof}
 	 	Let $A$ be a ML-random set. By Proposition \ref{RBT}, there exists a deep set~$X$ such that $A$ is ML-random relative to $X$. On the other hand, by Theorem \ref{MLRlowfordepth}, $X$ is also $A$-deep. We then consider the set 
 	 	$$A \triangle X = (A \smallsetminus X) \cup (X \smallsetminus A).$$
 	 	
 	 	We first notice that, since $A$ is ML-random relative to $X$, $A \triangle X$ is also ML-random relative to $X$ (as $K^X((A \triangle X) \upharpoonright n) =^+ K^X(A \upharpoonright n) \ge^+ n$) and hence shallow.
 	 	
 	 	On the other hand, since $X$ is $A$-deep and $X \leq_{tt} (A \triangle X) \oplus A$, by Proposition~\ref{basic_properties} (relativized SGL) $(A\triangle X) \oplus A$ must also be $A$-deep, which in turn implies that $A\triangle X$ is $A$-deep (as neither $K^A$ nor its time-bounded variants are affected by the join of $A$). 
 	\end{proof} 
 
 	\subsection{A digression on a result about PA-complete degrees}
 	
 	In this section we observe that the existence of sets as in the proof of Theorem~\ref{MLRnotlowfors} can improve upon (and give a simpler proof of) a theorem of Barmpalias, Lewis and Ng (\cite{BLN}), who proved that for every PA-complete $A$, there exist two Martin-L\"of random $X,Y$ such that $A \equiv_T X \oplus Y$. We will prove the following. 
 	
 	\begin{Theorem}\label{thm:bln-improved}
 		Let $A \in \text{DNC}_2$. Then there exist two Martin-L\"of random $X,Y$ such that $A = X \triangle Y$ (in particular $A \equiv_{tt} X \oplus Y$). 
 	\end{Theorem}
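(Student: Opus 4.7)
I would first reduce the theorem to a single tt-selection problem. Since $X \triangle Y = A$ automatically gives $A \leq_{tt} X \oplus Y$ by bitwise XOR, and the reverse reduction $X \oplus Y \leq_{tt} A$ follows as soon as $X, Y \leq_{tt} A$, it suffices to produce one set $X \leq_{tt} A$ such that both $X$ and $A \triangle X$ are Martin-L\"of random; then setting $Y := A \triangle X$ yields the decomposition and the tt-equivalence $A \equiv_{tt} X \oplus Y$ comes for free.

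To construct such an $X$, I would fix a level $k \geq 2$ of the universal Martin-L\"of test and let $\mathcal{V}_k$ denote the corresponding $\Sigma^0_1$ class of measure at most $2^{-k}$. Consider the $\Pi^0_1$ class
\[
\mathcal{Q} = \bigl\{(B, X) \in \sequences \times \sequences : B \in \text{DNC}_2,\ X \notin \mathcal{V}_k,\ X \triangle B \notin \mathcal{V}_k\bigr\}.
\]
For each $B \in \text{DNC}_2$ the fiber $\mathcal{Q}_B = \{X : (B,X) \in \mathcal{Q}\}$ has measure at least $1 - 2^{1-k} > 0$ by a union bound, so $\mathcal{Q}$ is non-empty. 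By the Medvedev universality of $\text{DNC}_2$ among non-empty $\Pi^0_1$ classes, there is a total tt-functional sending any member of $\text{DNC}_2$ to some element of $\mathcal{Q}$; the delicate step is to ensure that on our specific input $A$ the output lands in the fiber $\mathcal{Q}_A$ above $A$ itself.

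The main obstacle is precisely this uniform tt-selection from $\mathcal{Q}_A$, a $\Pi^0_1(A)$ class of positive measure bounded below by a computable constant. My plan is a Kucera-style path construction on the tree of prefixes compatible with $\mathcal{Q}_A$: the tree has density at least $1 - 2^{1-k}$ at every level, and the fact that $A$ is itself $\text{DNC}_2$ (hence PA-complete relative to every unrelativized $\Pi^0_1$ parameter we encode along the way) should promote the natural $A$-computable path selection into a genuine tt-reduction from $A$. The intuition mirrors the set built in Theorem~\ref{MLRnotlowfors}, where a shallow but $A$-deep set was obtained as the XOR of a $\text{DNC}_2$ element with an ML-random oracle; here we run the idea in reverse, constructing an ML-random mask $X$ directly from the $\text{DNC}_2$ input $A$. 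Once $X \leq_{tt} A$ is produced with both $X$ and $X \triangle A$ outside $\mathcal{V}_k$, setting $Y := A \triangle X$ gives ML-random $X, Y \leq_{tt} A$ with $X \triangle Y = A$, which is exactly what the theorem demands.
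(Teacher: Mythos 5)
Your opening reduction is fine, and you have correctly located the crux: producing $X \le_{tt} A$ inside the fiber $\mathcal{Q}_A$. But that step is not merely ``delicate'' --- it is the entire theorem, and the justification you offer for it rests on a false principle. The fact that $A$ is $\text{DNC}_2$ (hence of PA degree, even tt-uniformly) lets $A$ select paths through non-empty \emph{unrelativized} $\Pi^0_1$ classes; it gives no leverage over $\Pi^0_1(A)$ classes, because no set is DNC, let alone PA, relative to itself. Nor does the measure bound rescue the argument: for an arbitrary oracle $A$ one can build a $\Sigma^0_1(A)$ open set $\mathcal{W}$ of measure at most $1/2$ containing \emph{every} $A$-computable real, by enumerating a cylinder $[\Phi_e^A \upharpoonright n_e]$ of measure at most $2^{-e-2}$ as soon as $\Phi_e^A$ converges on a sufficiently long initial segment. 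So ``uniformly $\Pi^0_1(A)$ with density at least $1-2^{1-k}$ at every level'' is compatible with the class having no $A$-computable member at all, and a Ku\v{c}era-style walk down the tree of $\mathcal{Q}_A$-extendible strings cannot be carried out by $A$: extendibility is only co-c.e.\ in $A$, and deciding which child to keep is exactly the kind of question about its own $\Pi^0_1(A)$ classes that $A$ cannot settle. A correct proof along your lines would have to exploit the very specific shape of $\mathcal{Q}_A$ (the complement of $\mathcal{V}_k \cup (\mathcal{V}_k \triangle A)$ with $\mathcal{V}_k$ unrelativized), and you give no such argument.

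For comparison, the paper sidesteps the fiber problem entirely. It forms the unrelativized $\Pi^0_1$ class $\mathcal{C}$ of triples $F \oplus X \oplus Y$ with $F \in \text{DNC}_2$, $X,Y \in \text{MLR}_k$ and $F = X \triangle Y$ (non-empty by the Randomness Basis Theorem, as in Theorem~\ref{MLRnotlowfors}), observes that $\mathcal{C}$ is Medvedev-universal via its first projection, and applies the coding Lemma~\ref{lem:universal-pi01} to obtain such a triple with $F \oplus X \oplus Y \equiv_{tt} A$; since $F = X \triangle Y \le_{tt} X \oplus Y$, this yields $A \equiv_{tt} X \oplus Y$. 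Note that this produces $X \triangle Y = F$ with $F \equiv_{tt} A$ but not $F = A$, so even the paper's argument does not deliver the literal equality $A = X \triangle Y$ that you are aiming for --- a further indication that the on-the-nose fiber selection is a genuinely harder problem than your sketch acknowledges, not a routine adaptation of Theorem~\ref{MLRnotlowfors}.
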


In order to prove Theorem~\ref{thm:bln-improved}, we need the following property of universal $\Pi^0_1$ classes.
 	
 	\begin{Lemma}  \label{lem:universal-pi01}
	Let $\mathcal{C}$ be a non-empty universal $\Pi_1^0$ class. 
	\begin{itemize}
 		\item[(i)] For any $A \in \text{DNC}_2$, there is a set $B \in \mathcal{C}$ such that $A \equiv_{tt} B$
		\item[(ii)] For any $A$ which Turing-computes some $\text{DNC}_2$ function, there exists some $B \in \mathcal{C}$ such that $A \equiv_{T} B$. 
	\end{itemize}
 	\end{Lemma}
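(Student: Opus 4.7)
The plan is to exploit the Medvedev-completeness of both $\mathcal{C}$ and $\text{DNC}_2$. By universality of $\text{DNC}_2$ applied to the non-empty $\Pi^0_1$ class $\mathcal{C}$, there is a total functional $\Phi$ such that $\Phi^X \in \mathcal{C}$ for every $X \in \text{DNC}_2$; symmetrically, by universality of $\mathcal{C}$ applied to $\text{DNC}_2$, there is a total functional $\Psi$ such that $\Psi^Y \in \text{DNC}_2$ for every $Y \in \mathcal{C}$. These two uniform reductions will be the backbone of the proof.

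For (i), given $A \in \text{DNC}_2$, the candidate $B = \Phi^A \in \mathcal{C}$ immediately satisfies $B \leq_{tt} A$, so the crux is to arrange the reverse reduction $A \leq_{tt} B$. I plan to work inside the auxiliary $\Pi^0_1$ class $\mathcal{D} = \{\Psi^Y \oplus Y : Y \in \mathcal{C}\}$, which is $\Pi^0_1$ because the condition ``$Z \in \mathcal{D}$'' unpacks as the $\Pi^0_1$ conjunction $Z_o \in \mathcal{C}$ and $Z_e = \Psi^{Z_o}$. Every member $D \oplus Y$ of $\mathcal{D}$ automatically satisfies $D \equiv_{tt} Y$, via $\Psi$ in one direction and the projection to the odd bits in the other. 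Then, using the universality of $\text{DNC}_2$ applied to $\mathcal{D}$ (or to an appropriate $\Pi^0_1$ sub-class of $\mathcal{D}$ designed to target $A$) together with a padding step that replaces $A$ by a tt-equivalent $A^{*} \in \text{DNC}_2$ matching the encoding conventions of $\Psi$, I would produce $Y \in \mathcal{C}$ with $\Psi^Y = A^{*} \equiv_{tt} A$, from which the two-sided tt-reduction $A \equiv_{tt} Y$ follows.

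For (ii), the weaker Turing hypothesis gives more room. Starting from a $D \in \text{DNC}_2$ with $D \leq_T A$, the candidate $B_0 = \Phi^D \in \mathcal{C}$ already satisfies $B_0 \leq_T A$. To strengthen this to Turing-equivalence, I would use $A$ as an oracle to navigate the computable tree $T$ with $[T] = \mathcal{C}$, making $A$-computable branching decisions at surviving nodes of $T$ so that the resulting path both remains in $\mathcal{C}$ and encodes $A$ recoverably. This is a standard relativised basis-theorem style construction, and unlike the tt-case it does not need to be controlled by a uniform computable time bound.

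The main obstacle in both parts is bridging the gap between Medvedev-completeness and pointwise tt- or Turing-equivalence: universality guarantees only uniform one-directional reductions, whereas the lemma demands two-way equivalence between specific members of the two classes. The padding construction together with the auxiliary class $\mathcal{D}$ is the key mechanism for part (i), and an $A$-computable refinement along the tree of $\mathcal{C}$ plays the analogous role in part (ii).
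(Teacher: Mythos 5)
There is a genuine gap, and it is precisely the obstacle you name in your last paragraph without ever overcoming it. For part (i), applying the universality of $\text{DNC}_2$ to your auxiliary class $\mathcal{D} = \{\Psi^Y \oplus Y : Y \in \mathcal{C}\}$ only produces, from $A$, \emph{some} member $\Psi^Y \oplus Y \leq_{tt} A$; Medvedev completeness gives a uniform reduction \emph{into} the class, not a member that reduces back to $A$. Nothing forces $\Psi^Y$ to equal, or even to tt-compute, your padded target $A^*$: the image $\Psi[\mathcal{C}]$ is a closed subset of $\text{DNC}_2$ that may simply omit $A^*$, the set $\{Y \in \mathcal{C} : \Psi^Y = A^*\}$ is only $\Pi^0_1(A)$ and may well be empty, and no padding of $A$ can repair this. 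Part (ii) has the analogous problem twice over: making ``$A$-computable branching decisions at surviving nodes'' presupposes that $A$ can tell which nodes of the tree for $\mathcal{C}$ are extendible, which is a $\emptyset'$-question that an arbitrary PA-complete $A$ need not answer; and even granting that, you give no mechanism by which $A$ is recoverable from the resulting path, since the branch points themselves cannot be located from $B$ alone.

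The missing idea is the coding-by-forcing argument that the paper uses. One builds $B$ as the unique point of a nested sequence of nonempty $\Pi^0_1$ subclasses $\mathcal{C}_s \subseteq [\sigma_s]$ of $\mathcal{C}$, and the Recursion Theorem is used at each stage to manufacture indices whose diagonal values $A$ must avoid. An index $n_s$ is built so that $\phi_{n_s}(n_s)$ returns the first $i$ with $\mathcal{C}_s \cap [\sigma_s i] = \emptyset$; since $A(n_s) \neq \phi_{n_s}(n_s)$, the extension $\sigma_{s+1} = \sigma_s A(n_s)$ keeps the class nonempty \emph{without ever deciding emptiness}. A second index $m_s$ is built so that $\phi_{m_s}(m_s)$ would diagonalize against a constant value of $\Phi^X(m_s)$ on $\mathcal{C}_s \cap [\sigma_{s+1}]$; since $\Phi$ maps this class into $\text{DNC}_2$, the value $\phi_{m_s}(m_s)$ must diverge, so both values of $\Phi^X(m_s)$ are attained and one can restrict to $\{X : \Phi^X(m_s) = A(s)\}$. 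This codes $A(s)$ as $\Phi^B(m_s)$, which is recoverable from $B$ by the tt-functional $\Phi$ because $m_s$ itself is tt-computable from $B$; that is what yields $A \leq_{tt} B$. Part (ii) is the same construction with a $\text{DNC}_2$ function $F \leq_T A$ in place of $A$ at the branching steps. Your proposal contains neither the emptiness-avoidance trick nor the coding mechanism, so both directions of the required equivalences remain unproved.
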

 
 	\begin{proof}

$(i)$~	Let $\Phi$ witness the universality of $\mathcal{C}$, that is, $\Phi^X \in \text{DNC}_2$ whenever $X \in \mathcal{C}$. 
	
 		The construction of $B$ is done via an $A$-effective forcing argument: we define a sequence $\mathcal{C}_0 \supseteq \mathcal{C}_1 \supseteq \dots$ of $\Pi_1^0$ classes such that the sequence of indices for the $\mathcal{C}_i$ is $A$-computable, each of these classes is non-empty and that their intersection $\cap_s \mathcal{C}_s$ is a singleton which will be our~$B$.
 		
		To ensure that $\cap_s \mathcal{C}_s$ will be singleton, we will have at each step a string $\sigma_s$ such that $|\sigma_s|=s$ and $\mathcal{C}_s \subseteq [\sigma_s]$. 
		
 		Let $\mathcal{C}_0=\mathcal{C}$ and $\sigma_0=\lambda$. Inductively, assume that we have already built non-empty $\Pi_1^0$ classes $\mathcal{C}_0 \supseteq \dots \supseteq \mathcal{C}_s$ and strings $\sigma_0 \preceq \sigma_1 \preceq \ldots \preceq \sigma_s$ such that $\mathcal{C}_i \subseteq [\sigma_i]$ for all~$i \leq s$. 
	
	Using an index for $\mathcal{C}_s$, one can compute some  $n_{s}$ such that $\phi_{n_s}(n_s)$ returns the first~$i \in \{0,1\}$ found such that if $\mathcal{C}_s \cap [\sigma_s i] = \emptyset$ (and stays undefined if no such~$i$ is found). Since $A \in \text{DNC}_2$, we have $\phi_{n_s}(n_s) \not= A(n_s)$. Let thus $\sigma_{s+1} = \sigma_s A(n_s)$, so that $\mathcal{C}_s \cap [\sigma_{s+1}]$ is non-empty. 
	
Now, by the Recursion Theorem, let $m_s$ be an index such that $\phi_{m_s}$ waits for some~$i \in \{0,1\}$ to be such that 
\[
\left(\forall X \in \mathcal{C}_s \cap [\sigma_{s+1}]\right) \, \Phi^X(m_s) = i
\]
(since $\mathcal{C}_s \cap [\sigma_{s+1}]$ is a $\Pi^0_1$ class, this property is a c.e.\ event) and if such an~$i$ is found returns $\phi_{m_s}(m_s)=1-i$ (staying undefined otherwise).

Since the image of $\mathcal{C}_s \cap [\sigma_{s+1}]$ is contained in $\text{DNC}_2$, one must have $\Phi^X(m_s) \not= \phi_{m_s}(m_s)$ for any $X \in \mathcal{C}_s$. This means that in fact $\phi_{m_s}(m_s)$ must be undefined, which in turns implies that for all~$i \in \{0,1\}$, 
  
\[
\mathcal{C}_s \cap [\sigma_{s+1}] \cap \{X \, :  \Phi^X(m_s) = i\} \not=\emptyset.
\] 

Now define
\[
\mathcal{C}_{s+1} = \mathcal{C}_s \cap [\sigma_{s+1}] \cap \{X \, : \Phi^X(m_s) = A(s)\}
\]
  
Finally, define~$B$ to be the unique element of $\bigcap_s \mathcal{C}_s$ (or equivalently the limit of the $\sigma_s$). We claim that $B$ is as wanted. 

To see that $A$ computes $B$, observe that the whole construction is $A$-tt-effective. If we have already $A$-tt-computed an index for $\mathcal{C}_s$ and $\sigma_s$, $n_s$ can be effectively tt-computed and $\sigma_{s+1}=\sigma_s A(n_s)$ can be computed from $A$. Likewise, since the Recursion Theorem is effective, $m_s$ can then be computed, and an index for $\mathcal{C}_{s+1}$ tt-effectively obtained from $m_s$ and $A$. 

But the whole construction (indices for the $\mathcal{C}_s$ and strings $\sigma_s$) can also be tt-recovered from $B$. Indeed, $\sigma_{s}$ is the prefix of $B$ of length~$s$, thus $A(n_s)$ can be tt-computed from~$B$ for all~$s$, and to compute an index for $\mathcal{C}_{s+1}$ knowing one for $\mathcal{C}_{s}$ we only need to know $m_s$ (which can be tt-computed from the index of $\mathcal{C}_{s}$ and $\sigma_{s+1}$) and $A(s)$ which is just $\Phi^B(m_s)$ (and recall that $\Phi$ is a tt-functional). This implies that $B$ tt-computes the sequence of $m_s$ and thus $A(s)$ for each~$s$ since $A(s) = \Phi^B(m_s)$. \\

$(ii)$~The proof of this part is almost identical. If $A \geq_T F$ for some $\text{DNC}_2$ function, make the same construction only replacing $A(n_s)$ by $F(n_s)$. Then $A$ computes~$B$ (not necessarily in a tt-way since it needs to compute~$F(n_s)$ for all~$s$ to do so) and $B$ tt-computes~$A$.  
\end{proof}

%
 
 We can now prove Theorem \ref{thm:bln-improved}. \\
 
 \begin{proof}[Proof of Theorem~\ref{thm:bln-improved}]

 	For any $k \in \mathbb{N}$, let
 	$$\text{MLR}_k = \{X :\ (\forall n)K(X \upharpoonright n) \ge n-k\}.$$
	
	(which is a $\Pi^0_1$ class). By the characterization of Martin-L\"of randomness in terms of prefix-free Kolmogorov complexity, a set $X$ is Martin-L\"of random if and only if $X \in \text{MLR}_k$ for some large enough~$k$. 
	
 	In the proof of Theorem \ref{MLRnotlowfors} we saw  that for $k$ large enough, the $\Pi_1^0$ class 
	\[
\mathcal{C} = \{F \oplus X \oplus Y \, :  F \in \text{DNC}_2, \ X,Y \in \text{MLR}_k, \ F=X \triangle Y \} 
	\]
	
 	is non-empty. Moreover, the class $\mathcal{C}$ is universal , as witnessed by the first projection.  Let $A \in \text{DNC}_2$. By the previous lemma, $A \equiv_{tt} F \oplus X \oplus Y$ for some $F \oplus X \oplus Y  \in \mathcal{C}$. Clearly, $X \oplus Y \le_{tt}  F \oplus X \oplus Y$. On the other hand, since $F = X \triangle Y$, we also have that $F \le_{tt} X \oplus Y$ and hence $A \equiv_{tt} F \oplus X \oplus Y \equiv_{tt} X \oplus Y$. 
 	 
\end{proof}	 

\begin{Corollary}[Barmpalias, Lewis, Ng]
If $A$ has PA-complete Turing degree, there are two Martin-L\"of random sets $X,Y$ such that $A \equiv_T X \oplus Y$. 
\end{Corollary}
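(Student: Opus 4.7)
The plan is to repeat the argument of Theorem~\ref{thm:bln-improved} almost verbatim, but replacing the appeal to part~(i) of Lemma~\ref{lem:universal-pi01} by part~(ii), which is precisely designed to accommodate sets that are only Turing-above (rather than tt-equivalent to) some $\text{DNC}_2$ function. The cost of this replacement is that we only obtain $\equiv_T$ in the conclusion instead of $\equiv_{tt}$, which is exactly what the corollary asks for.

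More concretely, let $A$ have PA-complete Turing degree, so by definition $A$ Turing-computes some $F_0 \in \text{DNC}_2$. Fix $k$ large enough that the $\Pi^0_1$ class
\[
\mathcal{C} \;=\; \{F \oplus X \oplus Y : F \in \text{DNC}_2, \ X,Y \in \text{MLR}_k, \ F = X \triangle Y\}
\]
is non-empty, as in the proof of Theorem~\ref{thm:bln-improved}; projecting on the first coordinate shows that $\mathcal{C}$ is a universal $\Pi^0_1$ class. Applying Lemma~\ref{lem:universal-pi01}(ii) to $A$ (which computes the $\text{DNC}_2$ function $F_0$) and the class $\mathcal{C}$ yields some $F \oplus X \oplus Y \in \mathcal{C}$ with $A \equiv_T F \oplus X \oplus Y$.

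Now the identity $F = X \triangle Y$ gives $F \leq_{tt} X \oplus Y$, so $F \oplus X \oplus Y \equiv_{tt} X \oplus Y$, and therefore $A \equiv_T X \oplus Y$. Since $X, Y \in \text{MLR}_k$, both $X$ and $Y$ are Martin-L\"of random, completing the proof. There is no real obstacle here beyond noticing that part~(ii) of Lemma~\ref{lem:universal-pi01} was tailor-made for this situation; all the technical work has already been absorbed into the construction of $\mathcal{C}$ inside the proof of Theorem~\ref{thm:bln-improved} and into the forcing argument behind the lemma.
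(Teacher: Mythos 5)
Your proof is correct and uses the same essential ingredients as the paper: Lemma~\ref{lem:universal-pi01}(ii) together with the universal class $\mathcal{C}$ and the identity $F = X \triangle Y$. The only (immaterial) difference is that the paper first applies Lemma~\ref{lem:universal-pi01}(ii) with $\mathcal{C} = \text{DNC}_2$ to replace $A$ by a Turing-equivalent $\tilde{A} \in \text{DNC}_2$ and then cites Theorem~\ref{thm:bln-improved} as a black box, whereas you apply part~(ii) directly to the class $\mathcal{C}$ from the theorem's proof; both routes are equally valid.
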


\begin{proof}
Since $A$ has PA-complete Turing degree, by Lemma~\ref{lem:universal-pi01} (where $\mathcal{C}$ is taken to be the class $\text{DNC}_2$), $A$ is Turing equivalent to some $\tilde{A} \in \text{DNC}_2$. By the previous theorem, $\tilde{A}$ is in turn tt-equivalent (hence Turing equivalent) to the join of two Martin-L\"of random sets. 
\end{proof}
	 
	\section{An open question about K-trivial oracles} \label{K-trivial}
	
	The intuitive notion of \emph{being far from being ML-random} turned out to be a central notion in algorithmic randomness. This is formally expressed by the following definition. 
	
	\begin{Definition}
		A set $X \in \sequences$ is \emph{K-trivial} if 
		$$(\forall n)K(X \upharpoonright n) \le^+ K(n).$$
	\end{Definition}
	
	In other words, describing a prefix of $X$ is at most as hard as describing its length. Another lowness property related to prefix-free complexity is when an oracle does not help in further compressing strings. 
	
	\begin{Definition}
		A set $X \in \sequences$ is \emph{low for K} if 
		$$(\forall \sigma)\left[K(\sigma) \le^+ K^A(\sigma)\right].$$
	\end{Definition} 
	
	A famous result by Nies (in \cite{N2005}) is that this two notions are equivalent: a set is K-trivial if and only if it is low for $K$. Moreover, Moser and Stephan proved in \cite{MS} that every K-trivial set is shallow.
	
	It is easy to realize that every shallow set remains shallow relatively to any K-trivial oracle.
	
	\begin{Theorem}
		Let $A \in \sequences$ be K-trivial. Then every shallow set is $A$-shallow.
	\end{Theorem}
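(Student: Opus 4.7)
The plan is to combine Nies's theorem that K-trivial is equivalent to low for~$K$ with a direct simulation argument relating the time-bounded oracle complexity $K^{A,t'}$ to the unrelativized~$K^t$. The target is to show that if $X$ is shallow then, for any K-trivial~$A$, there is a computable time bound~$t'$ and a constant~$c$ such that $K^{A,t'}(X \upharpoonright n) \leq K^A(X \upharpoonright n) + c$ for infinitely many~$n$.

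Suppose $X$ is shallow, so there is a computable time bound~$t$ and a constant $c_1$ with $K^t(X \upharpoonright n) \leq K(X \upharpoonright n) + c_1$ for infinitely many~$n$. The first step I would carry out is to show that there exists a computable time bound~$t'$ with $K^{A,t'}(\sigma) \leq^+ K^t(\sigma)$ for every~$\sigma$. This follows from the universality of the relativized machine $\mathcal{U}^A$: the unrelativized universal machine~$\mathcal{U}$ is a prefix-free partial computable function, hence a fortiori a prefix-free partial $A$-computable function. By the standing quadratic simulation assumption on the universal machine (in its obvious relativized form), there is a fixed string $\rho_\mathcal{U}$ such that whenever $\mathcal{U}(\tau)[t(n)]\downarrow = \sigma$, we have $\mathcal{U}^A(\rho_\mathcal{U}\tau)[t(n)^2]\downarrow = \sigma$. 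Taking $t'(n) = t(n)^2$ gives the inequality up to the additive constant $|\rho_\mathcal{U}|$.

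The second step is to invoke Nies's theorem: since $A$ is K-trivial, $A$ is low for~$K$, so $K(\sigma) \leq^+ K^A(\sigma)$ for every~$\sigma$. Chaining the three inequalities, on the infinitely many~$n$ witnessing the shallowness of~$X$ one obtains
\[
K^{A,t'}(X \upharpoonright n) \leq^+ K^t(X \upharpoonright n) \leq^+ K(X \upharpoonright n) \leq^+ K^A(X \upharpoonright n),
\]
so the difference $K^{A,t'}(X \upharpoonright n) - K^A(X \upharpoonright n)$ stays bounded infinitely often, hence $X$ is $A$-shallow.

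There is no real obstacle here: the simulation of~$\mathcal{U}$ by $\mathcal{U}^A$ is immediate from the standing assumption that the universal machine absorbs other prefix-free partial (oracle) computable functions with at most quadratic overhead, and the reverse inequality $K \leq^+ K^A$ is precisely the content of lowness for~$K$, which is equivalent to K-triviality by Nies.
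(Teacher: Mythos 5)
Your proposal is correct and follows essentially the same route as the paper: chain $K^{A,t'} \leq^+ K^t =^+ K \leq^+ K^A$ on the infinitely many witnesses of shallowness, using Nies's equivalence of K-triviality and lowness for $K$ for the last step. The only cosmetic difference is that you explicitly account for the simulation overhead by passing to $t' = t^2$, where the paper simply writes $K^{A,t} \leq^+ K^t$; both are fine.
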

	
	\begin{proof}
		Let $X \in \sequences$ be shallow. Hence, there is a computable time bound $t$ such that 
		$$\left( \io n \right) \left[ K^t(X \upharpoonright n) =^+ K(X \upharpoonright n)\right].$$
		
		Then, for any such $n$, 
		$$K^{A,t}(X \upharpoonright n) \leq^+ K^t(X \upharpoonright n) =^+ K(X \upharpoonright n) \leq^+ K^A(X \upharpoonright n),$$
		where the last inequality follows because every K-trivial set is low for K.
		Hence, $X$ is $A$-shallow.
	\end{proof}

	It follows from the previous result that the class of deep sets relative to a K-trivial oracle is never larger than the class of deep sets. The following natural question remains open.
	
	\begin{Question}
		Let $A$ be a K-trivial set. Does $A$-depth always strictly imply depth? Or does $A$-depth and depth always coincide? Or does the answer depend on the particular oracle?
	\end{Question}
	
	\bibliographystyle{plain}
	\bibliography{Relativized_depth}

\end{document}